\numberwithin{equation}{section} \theoremstyle{plain}
\newtheorem{theorem}[subsection]{Theorem}
\newtheorem{proposition}[subsection]{Proposition}
\newtheorem{lemma}[subsection]{Lemma}
\newtheorem{corollary}[subsection]{Corollary}
\theoremstyle{remark}
\newtheorem{remark}[subsection]{Remark}
\newtheorem{example}[subsection]{Example}
\renewcommand{\leq}{\leqslant}
\renewcommand{\geq}{\geqslant}
\newsavebox{\proofbox}
\savebox{\proofbox}{\begin{picture}(7,7)  \put(0,0){\framebox(7,7){}}\end{picture}}
\newcommand\Z{\mathbb{Z}}
\newcommand\C{\mathbb{C}}
\newcommand\G{\mathbf{G}}
\newcommand\A{\mathbb{A}}
\newcommand\Spec{\operatorname{Spec}}
\newcommand\K{\mathbb{K}}
\newcommand\SL{\operatorname{SL}}
\newcommand\GL{\operatorname{GL}}
\newcommand\Hom{\operatorname{Hom}}
\newcommand\F{\mathbb{F}}
\newcommand\Q{\mathbb{Q}}
\renewcommand\O{\mathcal{O}}
\renewcommand\P{\mathcal{P}}
\newcommand\eps{\varepsilon}
\subjclass[2000]{Primary 20E45; Secondary 11R04, 11R44, 20D06, 20G30, 20G40}
\begin{document}

\title{On conjugacy growth of linear groups}
\thanks{The authors are grateful for grants from the ERC and the NSF}

\author[Breuillard]{Emmanuel Breuillard}
\author[Cornulier]{Yves de Cornulier}
\address[E.B. and Y.C.]{Laboratoire de Math\'ematiques\\
B\^atiment 425, Universit\'e Paris-Sud 11\\
91405 Orsay\\FRANCE}
\email{emmanuel.breuillard@math.u-psud.fr}
\email{yves.cornulier@math.u-psud.fr}

\author[Lubotzky]{Alexander Lubotzky}
\author[Meiri]{Chen Meiri}
\address[A.L. and C.M.]{Einstein institute of mathematics\\
Hebrew University\\
Jerusalem 91904\\
ISRAEL}
\email{alexlub@math.huji.ac.il}
\email{chen.meiri@mail.huji.ac.il}

\begin{abstract}
We investigate the conjugacy growth of finitely generated linear groups. We show that finitely generated non-virtually-solvable subgroups of $\GL_d$ have uniform exponential conjugacy growth and in fact that the number of distinct polynomials arising as characteristic polynomials of the elements of the ball of radius $n$ for the word metric has exponential growth rate bounded away from $0$ in terms of the dimension $d$ only.
\end{abstract}

\date{June 22, 2011}
\maketitle


\section{introduction}

Let $\Gamma$ be a finitely generated group which is generated by a
finite set $\Sigma$. Let $B_\Sigma(n)=(\Sigma \cup \Sigma^{-1})^n$ be the ball of radius $n$ in the Cayley
graph $\textrm{Cay}(\Gamma,\Sigma)$ of $\Gamma$ with respect to $\Sigma$, i.e., the set of elements in $\Gamma$ that
can be written as product of at most $n$ elements of $\Sigma\cup\Sigma^{-1}$. We denote by $|\cdot|$ the cardinality of a finite set and define
\begin{equation}\label{defalpha}\alpha_\Sigma:=\lim_{n \rightarrow \infty} \frac{\log|B_\Sigma(n)|}{n}.\end{equation}
The group $\Gamma$ is said to have exponential word growth if $\alpha_\Sigma>0$ for some (hence every) $\Sigma$ and uniform exponential word growth if $\inf_{\Sigma}\alpha_\Sigma>0$ when $\Sigma$ ranges over finite generating subsets. It follows from the Tits alternative \cite{tits} and the Milnor-Wolf theorem \cite{milnor,wolf} that non-virtually-nilpotent linear groups have exponential growth. Uniform exponential growth of these groups was established by Eskin-Mozes-Oh \cite{EMO} in characteristic zero and by Breuillard-Gelander in arbitrary characteristic \cite{breuillard-gelander}.

A related question advertised by Guba and Sapir in \cite{guba-sapir} and also discussed
in the forthcoming book \cite{mann} consists in determining the
\emph{conjugacy growth} of a group $\Gamma$ generated by a finite set
$\Sigma$. Namely, we are interested in the asymptotics of
the number $c_\Sigma(n)$ of conjugacy classes in $\Gamma$ intersecting the word ball $B_\Sigma(n)$ of
radius $n$. This question can be seen as a combinatorial analogue to the problem of counting the number of closed geodesics in a closed Riemannian manifold according to length, a problem much studied in the literature (see \cite{guba-sapir} and the references therein).
 Denote
$$\gamma_\Sigma:=\liminf_{n \rightarrow \infty} \frac{\log c_\Sigma(n)}{n}$$ and
say that $\Gamma$ has exponential conjugacy growth if $\gamma_\Sigma>0$
and uniform exponential conjugacy growth if $\inf_\Sigma\gamma_\Sigma>0$.

Rivin \cite[Obs. 12.4, \S 13]{rivin} computed the asymptotics of $c_\Sigma(n)$ for free groups. Ivanov \cite[\S 41.5]{olshanskii} proved the existence of groups with exponential growth and finitely many conjugacy classes; Osin \cite{osin} improved the result to get only two conjugacy classes. The conjugacy growth can therefore be dramatically smaller than the word growth. Guba and Sapir gave many examples of groups with exponential conjugacy growth and asked about other families of groups. In this
paper we answer their question for linear groups.

\begin{theorem}\label{linear} Let $\Gamma$ be a linear group, i.e.\ isomorphic to a subgroup of $\GL_d(F)$ for some field $F$, and suppose that $\Gamma$ is not virtually nilpotent. Then $\Gamma$ has uniform exponential conjugacy growth.
\end{theorem}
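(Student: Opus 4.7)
The obvious conjugacy invariant of a linear group is the characteristic polynomial, and, following the strengthened statement announced in the abstract, the plan is to prove that the number of \emph{distinct} characteristic polynomials $\chi_g$ as $g$ runs over $B_\Sigma(n)$ already grows like $e^{cn}$ with $c=c(d)$ depending only on the dimension. The proof then splits along the Tits alternative into a non-virtually-solvable case, treated by a free-subgroup-plus-ping-pong argument, and a virtually solvable but non-nilpotent case, treated via the structure theory of solvable linear groups.

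In the non-virtually-solvable case, the starting point is the uniform Tits alternative of Breuillard--Gelander (extending Eskin--Mozes--Oh in characteristic zero): there is a constant $N=N(d)$ such that every $B_\Sigma(N)$ contains a pair $(a,b)$ generating a free subgroup $F$. I would refine this by requiring, in addition, that after embedding the Zariski closure of $F$ into $\GL_d(K)$ over a suitable local field $K$, the elements $a$ and $b$ are loxodromic, satisfy a Schottky / ping-pong condition on $\mathbb{P}^{d-1}(K)$, and have multiplicatively independent dominant eigenvalues. These extra conditions cut out a Zariski-open dense set of pairs, so they can be forced within a search of bounded length thanks to the quantitative escape-from-subvarieties machinery already present in \cite{EMO,breuillard-gelander}. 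Given such a pair, the dominant eigenvalue of a Schottky word $w=a^{\epsilon_1}b^{\eta_1}\cdots a^{\epsilon_k}b^{\eta_k}$ is controlled by the full syllable data, and a counting lemma would show that $w\mapsto \chi_w$ has subexponential fibres on the length-$n$ ball of $F$, so that its image has cardinality $e^{cn}$.

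In the virtually solvable non-nilpotent case, characteristic polynomials are too coarse on their own (they do not separate unipotent classes), so I would proceed by structure theory instead: Mal'cev triangularisation reduces, after passing to a finite-index subgroup, to an upper-triangular subgroup of $\GL_d(K)$, and the linear form of the Milnor--Wolf criterion supplies a metabelian subgroup of the shape $A\rtimes\langle t\rangle$ in which $t$ acts on $A\otimes K_v$ with an eigenvalue of absolute value $\ne 1$ in some completion $K_v$. A direct description of conjugacy classes in $A\rtimes\langle t\rangle$ shows that the $\langle t\rangle$-orbits on $A$ are exponentially sparse for the word norm, yielding $e^{cn}$ conjugacy classes already in this metabelian subgroup; the passage back to $\Gamma$ follows by comparing word metrics and noting that $\Gamma$-conjugacy, restricted to the subgroup, is only boundedly coarser than intrinsic conjugacy there.

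The main technical obstacle is the counting step in the first case: upgrading the qualitative observation ``generic Schottky words have distinct characteristic polynomials'' into an effective bound of rate $c=c(d)$ uniform in $\Sigma$ and in the ambient field. The fibres of $w\mapsto \chi_w$ are cut out by the subvariety $\{(g_1,g_2)\in\GL_d^2 : \chi_{g_1}=\chi_{g_2}\}$, and controlling how often a long free-group orbit returns to it demands a quantitative ping-pong lemma that tracks the full spectral data of a word, not merely its non-triviality or freeness, while staying uniform in the characteristic of the ground field in the spirit of \cite{breuillard-gelander}.
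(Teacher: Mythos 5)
Your proposal for the virtually solvable case essentially reconstructs the approach of Breuillard--Cornulier \cite{breuillard-cornulier}, which is exactly what the paper cites to dispose of that case, so that half is on the right track.

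For the non-virtually-solvable case, however, your route is genuinely different from the paper's, and the difference matters. The paper first specializes to a global field $\K$, reduces to reductive Zariski closure, and then reduces modulo a well-chosen prime $\P$ with $|\P|\approx e^{\alpha n}$: using strong approximation (Weisfeiler/Pink) and the Product Theorem (Pyber--Szab\'o, Breuillard--Green--Tao) it shows $\pi_\P(B_\Sigma(Cn))=\G(\K_\P)$ and then counts characteristic polynomials in $\G(\K_\P)$ by Lang--Weil. The crucial point is that in this scheme the ``distinctness'' question is replaced by a \emph{surjectivity} question onto a finite group, where counting is immediate. You instead propose to stay in the original group, extract a ping-pong pair from a bounded ball via the uniform Tits alternative, and count the spectral data of Schottky words. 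The step you flag yourself --- proving that $w\mapsto\chi_w$ has subexponential fibres on the length-$n$ ball of the free subgroup --- is not a technicality but the entire difficulty, and it is not delivered by the escape-from-subvarieties and loxodromic machinery of \cite{EMO,breuillard-gelander}. Those tools give you bounds on the \emph{magnitude} of a dominant eigenvalue (it grows roughly like $C^{|w|}$ up to controlled multiplicative error at one place), not arithmetic distinctness of characteristic polynomials, and knowing the magnitude at a single place does not separate exponentially many algebraic numbers. What you would actually need is a quantitative ``length-spectrum multiplicity'' bound --- an upper bound on the number of words of length $n$ sharing a given characteristic polynomial --- uniform in $d$ and in the field, including positive characteristic. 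Such a bound is of Diophantine nature (it is the algebraic cousin of bounds on multiplicities in geodesic length spectra) and no argument in the ping-pong literature yields it; even the rank-one, characteristic-zero version is a nontrivial theorem. So as written there is a genuine gap: you have reduced the problem to a counting lemma that is plausibly true but at least as hard as the statement you are trying to prove, whereas the paper circumvents it entirely by passing to finite quotients, where the product theorem turns exponential word growth directly into surjectivity and hence into an exponential number of characteristic polynomials.

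A smaller point: the uniformity in Theorem \ref{mainthm} --- a rate $c(d)$ depending only on $d$, not on $F$ or $\Gamma$ --- is obtained in the paper from the uniform exponential growth of \cite{breuillard1} and the fact that the constants in the Product Theorem and Lang--Weil depend only on $d$. In your scheme one would additionally need the fibre bound to be uniform across all fields and all ping-pong pairs obtainable from a bounded ball, which is a further layer of difficulty beyond what the uniform Tits alternative provides.
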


The case of virtually solvable groups, linear or not, was treated in \cite{breuillard-cornulier} (and independently by M.\ Hull in \cite{hull} in the polycyclic case): such groups have uniform exponential conjugacy growth unless they are virtually nilpotent. So in this paper we focus on non-virtually-solvable linear groups. We actually consider the finer problem of counting, given a finitely generated subgroup $\Gamma$ in $\GL_d(F)$, the number of $\GL_d$-conjugacy classes in the balls of $\Gamma$, resulting in the following theorem, which immediately entails Theorem \ref{linear}.

\begin{theorem}\label{mainthm} For every integer $d$, there exists a constant $c(d)>0$ such that if $F$ is a field and $\Sigma$ a finite symmetric subset of $\GL_d(F)$ generating a non-virtually-solvable subgroup, then $$\liminf_{n \rightarrow \infty} \frac{1}{n} \log \chi_{\Sigma}(n)\geq c(d),$$ where $\chi_{\Sigma}(n)$ is the number of elements in $F[X]$ appearing as characteristic polynomials of elements of $\Sigma^n$.
\end{theorem}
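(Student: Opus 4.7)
The plan is to combine the uniform Tits alternative of Breuillard--Gelander with a counting argument inside the resulting free subgroup. By \cite{breuillard-gelander}, there exists $N=N(d)$ such that the ball $\Sigma^N$ always contains two elements $a,b$ generating a free subgroup of rank two. A refinement of this result (exploiting proximal elements, as in the Tits alternative) moreover produces such a pair $(a,b)$ satisfying, after a suitable specialization of $F$ into a local field $K$ and passing to some $\GL_d$-representation, that $a$ and $b$ are both very proximal on the associated projective space with disjoint attracting and repelling balls, hence play ping-pong in the classical Schottky sense.

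Since $a,b\in\Sigma^N$, every reduced word of length $k$ in $\{a^{\pm1},b^{\pm1}\}$ lies in $\Sigma^{Nk}$. The theorem will therefore follow once we show that the number of distinct characteristic polynomials of reduced words of length $\le k$ in $\langle a,b\rangle$ grows at least like $e^{c'k}$ for some $c'=c'(d)>0$; we then take $c(d)=c'/N$. A natural family to work with is the set of words $w_I=g_{i_1}g_{i_2}\cdots g_{i_k}$ for $I=(i_1,\ldots,i_k)\in\{1,2\}^k$, where $g_1,g_2\in\langle a,b\rangle$ are an auxiliary pair of very proximal Schottky elements obtainable as short words in $a,b$. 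Each $w_I$ has length $O(k)$ and is itself very proximal, with a dominant eigenvalue (a root of $\chi(w_I)$) admitting a controlled asymptotic product formula in $I$ coming from ping-pong. The goal is then to show that the map $I\mapsto\chi(w_I)$ takes at least $c_0^{k}$ values for some $c_0=c_0(d)>1$.

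The main obstacle is this last separation step. Since $\chi(w)$ is only a degree-$d$ polynomial, while the candidate family has $2^k$ elements, massive collisions are unavoidable, and one must nevertheless extract a uniform exponential lower bound on the number of distinct characteristic polynomials, with constant depending only on $d$. For $d=2$ this essentially reduces to separating traces on $F_2$ via Fricke-type identities and classical hyperbolic geometry; for general $d$, one expects a combination of (i) ping-pong estimates providing quantitative separation of the dominant eigenvalues of the $w_I$, and (ii) a Zariski-density or algebraic-independence argument controlling the subdominant coefficients of $\chi(w_I)$ (equivalently, the traces on the exterior powers $\wedge^j K^d$). The delicacy is to make both (i) and (ii) uniform in $F$ and in $\Sigma$, so that the constant $c_0(d)$ really depends only on the dimension.
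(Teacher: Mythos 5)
Your proposal takes a genuinely different route from the paper, and it contains a gap that you yourself flag but do not close. The paper does not work inside a free subgroup at all. Instead it specializes to a global field $\K$, reduces to the case where the Zariski closure is reductive, and then uses an arithmetic mechanism: strong approximation (Weisfeiler/Pink) to show that $\langle\Sigma\rangle$ surjects onto $\mathbf{H}(\K_\P)$ for almost all primes $\P$, a Chebyshev-plus-pigeonhole argument to find exponentially many primes $\P$ with $|\P|\approx e^{\Theta(n)}$ on which reduction mod $\P$ is injective on a ball, and the Product Theorem of Breuillard--Green--Tao and Pyber--Szab\'o to upgrade ``$\pi_\P(B_n)$ is large'' to ``$\pi_\P(B_{Dn})$ is everything.'' At that point the lower bound on $\chi_\Sigma$ is a Lang--Weil count: a level set $\{\chi=f\}$ is a bounded-complexity proper subvariety, so it has $O(q^{d-1})$ points in $\mathbf{H}(\K_\P)$, while $|\mathbf{H}(\K_\P)|\gg q^d$; hence one needs $\gg q$ level sets to cover the image of the ball. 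The only place where the uniform Tits alternative enters the paper is through the uniform exponential \emph{word} growth bound of \cite{breuillard1}, which feeds $\alpha_\Sigma\ge c(d)$ into the pigeonhole.

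The gap in your proposal is precisely the step you call ``the main obstacle'': producing a uniform exponential lower bound on the number of distinct characteristic polynomials among the words $w_I$. As you observe, $\chi$ is only a degree-$d$ polynomial while your family has $2^k$ members, so collisions are forced; but you offer no mechanism to bound them. Ping-pong estimates give quantitative control on the dominant singular value and hence on the spectral radius, and this is enough to distinguish words of \emph{different} lengths, giving only linearly many distinct characteristic polynomials from that source alone. To get exponentially many you need to separate words of the \emph{same} length, which requires controlling the subdominant data of $\chi(w_I)$, and you do not say how. For $d=2$ a trace-based ping-pong argument is plausible (and known in other contexts), but it is not obviously uniform in $\Sigma$ and $F$, and for general $d$ your appeal to ``Zariski-density or algebraic-independence'' is a placeholder rather than an argument. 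There is also a positive-characteristic subtlety you pass over: the specialization into a local field with genuinely proximal elements, with constants depending only on $d$, is exactly the hard part of the uniform Tits alternative, and even granting it, the passage from proximality to counting distinct $\chi$-values is where the work would be. So as written, the proposal does not constitute a proof; it isolates the crux correctly but leaves it open.
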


Combining this with exponential conjugacy growth in the solvable case \cite{breuillard-cornulier} and some further simple remarks in the solvable case (Proposition \ref{growsol}), we get the following trichotomy
\begin{corollary}
Let $F$ be any field and let $\Gamma$ be a finitely generated subgroup of $\GL_d(F)$. Then exactly one of the following holds
\begin{enumerate}
\item $\Gamma$ is virtually nilpotent (so has polynomial growth);
\item $\Gamma$ is virtually solvable but not virtually nilpotent; it has exponential conjugacy growth, while $\chi_\Sigma(n)$ is bounded above by a polynomial whose degree depends only on $\Gamma$ and not on $\Sigma$;
\item $\Gamma$ is not virtually solvable and then $\chi_\Sigma(n)$ grows exponentially with a rate bounded below by a constant $\mu>0$ depending only on $d$.
\end{enumerate}
\end{corollary}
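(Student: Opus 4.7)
The trichotomy is mutually exclusive and exhaustive by inspection of the definitions. Case (iii) is immediate from Theorem \ref{mainthm}: distinct characteristic polynomials correspond to distinct $\GL_d$-conjugacy classes, hence a fortiori to distinct $\Gamma$-conjugacy classes, so $\chi_\Sigma(n)$ is a lower bound for the conjugacy growth. Case (i) is classical: virtually nilpotent groups have polynomial word growth (Wolf), which forces polynomial conjugacy growth and polynomial $\chi_\Sigma$. The real work lies in case (ii); its exponential conjugacy growth half is quoted from \cite{breuillard-cornulier}, so the task reduces to proving the polynomial upper bound on $\chi_\Sigma(n)$ with degree depending only on $\Gamma$.

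My plan is to reduce to a triangularizable subgroup via Lie-Kolchin. I first pass to the Zariski closure $G \subset \GL_d(\bar F)$ of $\Gamma$; $G$ remains virtually solvable, and its identity component $G^\circ$ is connected solvable, hence triangularizable by Lie-Kolchin after conjugating in $\GL_d(\bar F)$. Set $\Gamma_0 := \Gamma \cap G^\circ$, a normal subgroup of finite index $k = k(\Gamma)$ in $\Gamma$.

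For $g \in \Gamma_0$, the characteristic polynomial of $g$ factors as $\prod_{i=1}^d (X - \chi_i(g))$, where the $\chi_i : \Gamma_0 \to \bar F^*$ are the diagonal characters—in particular group homomorphisms. Each image $\chi_i(\Gamma_0)$ is a finitely generated subgroup of $\bar F^*$, hence of the form $\Z^{r_i} \oplus (\text{torsion})$, which has polynomial growth of degree $r_i$ independent of the chosen generating set. By a standard Schreier-rewriting argument there is a finite generating set $S_0$ of $\Gamma_0$ with $B_\Sigma(n) \cap \Gamma_0 \subseteq B_{S_0}(n)$; applying $\chi_i$ then gives $|\chi_i(B_\Sigma(n) \cap \Gamma_0)| \leq c_i n^{r_i}$. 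Multiplying over $i$ bounds the number of characteristic polynomials occurring in $B_\Sigma(n) \cap \Gamma_0$ by a polynomial of degree $r := r_1 + \cdots + r_d$ depending only on $\Gamma$.

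Finally, I would descend from $\Gamma_0$ to $\Gamma$ by noting that every $g \in \Gamma$ satisfies $g^k \in \Gamma_0$, and the eigenvalues of $g$ are determined by those of $g^k$ up to $k$-th roots of unity on each entry—so at most $k^d$ characteristic polynomials of $g$ correspond to a given characteristic polynomial of $g^k$. Since $g \in B_\Sigma(n)$ forces $g^k \in B_\Sigma(kn) \cap \Gamma_0$, we obtain $\chi_\Sigma(n) \leq k^d \cdot Q(kn)$ with $Q$ the polynomial from the previous step, still polynomial in $n$ of degree $r$. The only nontrivial input is Lie-Kolchin over $\bar F$; the main subtlety to keep track of is that the index $k$, the ranks $r_i$, and the Schreier generating set together yield a final polynomial whose \emph{degree} is determined by $\Gamma$ alone, while multiplicative constants are allowed to depend on $\Sigma$.
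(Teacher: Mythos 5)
Your trichotomy structure and the handling of cases (i) and (iii) match the paper exactly: (i) is the trivial bound $\chi_\Sigma(n)\le|B_\Sigma(n)|$, and (iii) is verbatim Theorem~\ref{mainthm}. The substantive work is the polynomial upper bound on $\chi_\Sigma$ in case~(ii), and there your route genuinely differs from the paper's Proposition~\ref{growsol}. The paper passes to the semisimplification $\pi$ of the representation (which preserves characteristic polynomials), observes that the Zariski closure of $\pi(\Gamma)$ is both reductive and virtually solvable and therefore virtually abelian, and then simply uses that $\pi(\Gamma)$ has polynomial word growth to bound $\chi_\Sigma(n)\le|\pi(B_\Sigma(n))|$. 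You instead pass to the identity component $G^\circ$ of the Zariski closure, invoke Lie--Kolchin to triangularize, extract the $d$ diagonal characters $\chi_i:\Gamma_0\to\bar F^\times$, bound the growth of each finitely generated abelian image $\chi_i(\Gamma_0)$ separately (using a Schreier-rewriting length comparison, which the paper also uses in its proof of Proposition~\ref{explimain}), multiply, and then descend from $\Gamma_0$ to $\Gamma$ by the $g\mapsto g^k$ trick with a $k^d$ ambiguity coming from $k$-th roots. Both arguments rest on the same structural fact --- a virtually solvable linear group becomes virtually abelian once the unipotent part is factored out --- but the paper expresses it via semisimplification and reductivity while you express it via triangularization and diagonal characters. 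Each gives a polynomial bound whose degree is intrinsic to the embedded group $\Gamma$ and independent of $\Sigma$: yours is $r=\sum_i\operatorname{rk}\,\chi_i(\Gamma_0)$, the paper's is the polynomial-growth degree of the virtually abelian group $\pi(\Gamma)$. Your version is arguably more explicit about where the degree comes from; the paper's is shorter because it offloads everything onto the single statement that virtually abelian groups have polynomial growth. One very minor point worth making explicit in your write-up: in positive characteristic $p$ with $p\mid k$, the map $x\mapsto x^k$ on $\bar F^\times$ has fibers of size strictly less than $k$ (Frobenius is injective), so the $k^d$ bound in the descent step remains valid and is even wasteful; no harm, but it is worth a sentence.
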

This is summarized in the following table.

\vspace{.1cm}
\makebox[\linewidth]{
\footnotesize
\begin{tabular}{|c|c|c|c|}
\hline
$\Gamma$ & growth  & conjugacy  & characteristic poly-\\
 & $|B_\Sigma(n)|$ & growth $b^c_\Sigma(n)$ & nomial growth $\chi_\Sigma(n)$ \\
\hline
v.~nilpotent & {\em polynomial} & {\em polynomial} & {\em polynomial}\\
\hline
v.~solvable not v.~nilpotent & {\em exponential} & {\em exponential} & {\em polynomial}\\
\hline
not v.~solvable & {\em exponential} & {\em exponential} & {\em exponential}\\
\hline
\end{tabular}
}
\vspace{0.2cm}

Note that we have claimed here a strong form of uniformity, in which the rate of exponential conjugacy growth $\gamma_\Sigma$ depends only on $d$ and not on the subgroup $\Gamma$ of $\GL_d$ nor the field $F$. We will make use here of the fact, proved by the first named author in \cite{breuillard1} building on the earlier works \cite{EMO,breuillard-gelander,breuillard0} that non-virtually-solvable linear groups in $\GL_d$ have a word growth rate bounded from below by a positive lower bound depending only on $d$ and not on the field of definition. This used as a key ingredient the main result of \cite{breuillard0} which solved a semisimple analogue of the (still open) Lehmer conjecture from diophantine geometry. The uniformity for the whole class of solvable non-virtually-nilpotent subgroups of $\GL_2(\mathbb{C})$, for which a positive answer would imply the validity of the classical Lehmer conjecture \cite{breuillard2}, is still an open question.

\vspace{0.2cm}

\noindent\emph{About the proof.} A standard specialization argument shows that it is enough to prove Theorem \ref{mainthm} in the case where $\K$ is a global field, i.e.\ a finite extension of $\Q$ or $\mathbb{F}_p(t)$. Besides, the proof essentially boils down to the case where the Zariski closure $\G$ of $\langle\Sigma\rangle$ in $\GL_d$ is semisimple.
Then using strong approximation (Weisfeiler \cite{weisfeiler}, Pink \cite{pink}) for Zariski-dense subgroups of simple algebraic groups, the more recent Product Theorem
of Pyber-Szab\'o and Breuillard-Green-Tao \cite{pyber-szabo,breuillard-green-tao} on the classification of approximate subgroups of simple algebraic groups over finite fields, and a pigeonhole
argument using classical results about the distribution of primes, we prove that for many prime ideals $\P$ of the ring of
integers $\O_\K$ whose norm $|\P|:=|\O_\K/\P|$ is exponential in $n$,
the reduction map $\G(\O_\K)\to\G(\O_\K/\P)$ is surjective when
restricted to $B_\Sigma(Cn)$, where $C$ is a constant depending on
$d$ only. At this point we use the fact that the number of distinct
characteristic polynomials of elements of $\G(\O_\K/\P)$ depends
polynomially on $|\P|$ and thus is exponential in $n$.

\vspace{0.2cm}

In fact our methods can yield variants of Theorem \ref{mainthm}, see Section~\ref{concluding}. For example, if the Zariski closure $\G$ of $\Gamma$ is a connected simple algebraic group and $P$ is an arbitrary non-constant polynomial function on $\G$, then $P$ achieves exponentially many values on $B_\Sigma(n)$, where the exponential rate of growth has a lower bound depending only on $d$. While it is possible to extend this latter result to the semisimple case using the same method, we do not include a proof in this paper for two reasons. Firstly some serious technicalities arise, in particular when applying strong approximation in positive characteristic due to the presence of Frobenius twists (see \cite{pink}). Secondly as shown to us by E.~Hrushovski (private communication), it is possible to give a completely different treatment of this theorem (including an extension of Theorem \ref{fiber} to semisimple groups). His approach avoids any appeal to strong approximation nor to the product theorem, but uses instead ideas from model theory and still reduces the counting problem to the ordinary word growth, hence to \cite{breuillard1}, as in Theorem \ref{cover} below.

\vspace{.3cm}

\noindent \emph{Outline of the paper.} The paper is organized as
follows. In Section \ref{sketch}, we give a sketch of proof in
the particular case of Zariski dense subgroups of $\SL_d(\mathbb{Z})$.
In
Section
\ref{pigeonhole} we give a
quantitative version
of the fact that reduction modulo a large prime is injective on
finite subsets. In Section \ref{sapp}, we derive a fast generation result for
the mod $p$ quotients of $\Gamma$ using the strong approximation
theorem and the results on approximate groups mentioned above, which
we recall in Section \ref{app}. In Section \ref{cov}, we show that the ball of
radius $n$ in $\Gamma$ cannot be covered by less than an exponential
number of proper hypersurfaces of $\G$ of bounded degree. There we
elaborate slightly more than what is needed for the immediate
application to Theorem \ref{mainthm}. Some of these further
applications are described in Section \ref{concluding}. The proof of \ref{mainthm}
is completed in Section~\ref{reduc}.

\vspace{.3cm}
\noindent \emph{Acknowledgements}. The first two authors would like to thank the Hebrew University for its hospitality during their visit in the autumn of 2010. We are grateful to M.~Sapir for bringing the problem of conjugacy growth of linear groups to our attention. We also thank A.~Chambert-Loir and E.~Hrushovski for useful conversations and A.~Mann for his comments on an early version of this paper.


\section{Sketch of proof: a particular case}\label{sketch}

We provide here a sketch of proof in the particular case of Zariski dense
subgroups of $\SL_m(\Z)$. It contains the highlights of the proof of the
general case, although the latter is technically more involved.

\begin{theorem}\label{partcase}For every $m\ge 2$, there exists a constant $c=c(m)>0$ such that for every symmetric set $\Sigma$ in $\SL_m(\Z)$ generating a
Zariski-dense subgroup of $\SL_m$, there exists $N=N(\Sigma,m)\in \mathbb{N}$
such that for every $n \ge N(\Sigma,m)$ the number of traces of
elements of the ball $B_\Sigma(n)$ is at least
$e^{cn}$.
\end{theorem}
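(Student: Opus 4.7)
\emph{Sketch.} The strategy is to find, for each sufficiently large $n$, a prime $p$ of size roughly $e^{c_0 n}$ such that the reduction $\pi_p \colon \SL_m(\Z) \to \SL_m(\F_p)$ sends the ball $B_\Sigma(Cn)$ onto the whole group $\SL_m(\F_p)$, for some constant $C=C(m)$. Since every value of $\F_p$ is the trace of some upper-triangular element of $\SL_m(\F_p)$, the image then contains elements of $p$ distinct mod-$p$ traces, forcing $B_\Sigma(Cn)$ to contain at least $p \geq e^{c_0 n}$ distinct integer traces. This gives an exponential rate of $c_0(m)/C(m)$, depending only on $m$.

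Two classical ingredients initiate the argument. First, by strong approximation (Weisfeiler \cite{weisfeiler}), the Zariski density of $\langle\Sigma\rangle$ in $\SL_m$ forces $\pi_p(\langle\Sigma\rangle) = \SL_m(\F_p)$ for every prime $p$ outside a finite exceptional set $S(\Sigma)$. Second, the uniform lower bound on word growth for non-virtually-solvable linear groups \cite{breuillard1} supplies a constant $c_0 = c_0(m) > 0$ satisfying $|B_\Sigma(n)| \geq e^{c_0 n}$ for all $n \geq N_0(\Sigma)$.

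The core of the proof is a quantitative pigeonhole. Fix any subset $T \subseteq B_\Sigma(n)$ of size $\lceil e^{c_0 n / 2}\rceil$. Two distinct $g, h \in T$ can collide under $\pi_p$ only when $p$ divides some nonzero entry of the integer matrix $g - h$; since every such entry is bounded in absolute value by $e^{O_\Sigma(n)}$, it admits only $O_\Sigma(n)$ prime divisors. Summing over pairs, at most $O_\Sigma(n\, e^{c_0 n})$ primes $p \leq e^{2 c_0 n}$ can fail to be injective on $T$. The prime number theorem supplies $\sim e^{2c_0 n}/(2 c_0 n)$ primes in the window $[e^{c_0 n}, e^{2 c_0 n}]$, which dominates the bad count once $n \geq N(\Sigma, m)$. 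We pick such a good prime $p \notin S(\Sigma)$; then $A := \pi_p(B_\Sigma(n))$ is a generating set of $\SL_m(\F_p)$ of size at least $e^{c_0 n /2}$.

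The Product Theorem of Pyber--Szab\'o and Breuillard--Green--Tao \cite{pyber-szabo, breuillard-green-tao} now finishes the job: there exists $\delta(m) > 0$ such that for every generating set $A$ of $\SL_m(\F_p)$, either $A^3 = \SL_m(\F_p)$ or $|A^3| \geq |A|^{1+\delta}$. Iterating from $|A| \geq e^{c_0 n /2}$, and noting that $\log |\SL_m(\F_p)| = O(m^2 \log p) = O_m(c_0 n)$, after $k = O_m(1)$ triplings the first alternative must occur, so $\pi_p(B_\Sigma(3^k n)) = \SL_m(\F_p)$, and $C(m) := 3^k$ does the job. The principal obstacle is keeping all constants uniform in $\Sigma$: this succeeds because $c_0(m)$, the width of the prime window, and the number of Product Theorem iterations are calibrated purely to $m$, with only the threshold $N(\Sigma, m)$ absorbing the $\Sigma$-dependent polynomial factors in the pigeonhole bound.
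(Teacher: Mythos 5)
Your proposal matches the paper's proof of Theorem \ref{partcase} in all essential respects: uniform exponential growth to produce an exponentially large ball, a pigeonhole against Chebyshev's prime-counting estimate over a window of primes of size $e^{O(n)}$ to find a prime modulo which the ball injects, strong approximation for surjectivity of $\pi_p$ on the generated subgroup, and the Product Theorem to upgrade this to surjectivity on a ball of comparable radius (your unfolded tripling iteration is exactly what the paper packages as Corollary~\ref{fastgeneration}). The only slip is the claim that every value of $\F_p$ is the trace of an \emph{upper-triangular} matrix in $\SL_m(\F_p)$, which is false already for $m=2$ (upper-triangular traces are exactly the values $d+d^{-1}$, $d\in\F_p^\times$); what is actually needed, and what the paper uses, is merely that every $t\in\F_p$ is the trace of \emph{some} element of $\SL_m(\F_p)$, which follows from the $2\times 2$ companion block $\begin{pmatrix}0&-1\\1&t-(m-2)\end{pmatrix}\oplus I_{m-2}$.
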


The proof will use the following three theorems. We state each one here in the case of our specific situation. The first theorem
asserts that $\SL_m(\Z)$ and its Zariski-dense subgroups have uniform exponential growth in a uniform way:
\begin{theorem}[Eskin-Mozes-Oh {\cite{EMO}}]\label{uni-sl}For every $m\ge 2$, there exists $\alpha=\alpha(m)>0$ such that $|B_\Sigma(n)|\ge e^{\alpha
n}$ for every $n\in \mathbb{N}$ and every symmetric subset
$\Sigma$ in $\SL_m(\Z)$ generating a Zariski-dense subgroup.
\end{theorem}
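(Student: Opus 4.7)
The plan is to prove a quantitative Tits alternative: produce two elements $a,b\in\Sigma^{C}$ with $C=C(m)$ depending only on $m$, which generate a free sub-semigroup of rank two. Since then $B_\Sigma(Cn)$ contains the $2^n$ distinct words of length $n$ in $\{a,b\}$, we obtain $|B_\Sigma(n)|\geq 2^{\lfloor n/C\rfloor}$, proving the theorem with $\alpha(m)=(\log 2)/C$. The elements $a,b$ will be constructed as sufficiently high powers of a ping-pong pair for the action of $\SL_m(\R)$ on the projective space $\mathbb{P}^{m-1}(\R)$: two proximal elements whose attracting fixed points and repelling hyperplanes are in general position with respect to each other.

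The central technical tool is an effective \emph{escape from subvarieties} principle: for every proper Zariski-closed subvariety $V\subset\SL_m$ of degree at most $D$, there is $k=k(m,D)$ such that for every symmetric $\Sigma\subset\SL_m(\Z)$ generating a Zariski-dense subgroup, some $w\in\Sigma^k$ avoids $V$. The argument considers the descending chain of subvarieties obtained by replacing $V$ at each stage by the Zariski closure of $\Sigma^{2^i}\cap V_{i-1}$; Zariski-density forces the chain to terminate, and a uniform bound on its length follows from a Bezout-type bound on chains of bounded-degree subvarieties in $\SL_m$. I will apply this twice: first to the subvariety of non-proximal elements (those lacking a simple dominant eigenvalue), producing a proximal $g\in\Sigma^{k_1}$; then, with $g$ fixed and with attracting fixed point $x_g^+\in\mathbb{P}^{m-1}(\R)$ and repelling hyperplane $H_g^-$, to the subvariety of $h$ for which $h\cdot x_g^+$ coincides with $x_g^+$ or lies in $H_g^-$, producing $h\in\Sigma^{k_2}$ such that $g':=hgh^{-1}$ has attracting/repelling data disjoint from that of $g$.

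The main obstacle is making the ping-pong argument both quantitative and uniform in $\Sigma$. Once $g$ and $g'$ are in general position, one must pass to powers $g^{N}$ and $g'^{N}$ large enough that these contract most of $\mathbb{P}^{m-1}(\R)$ into small neighborhoods of their attracting points, after which a ping-pong lemma shows they freely generate a semigroup. The required $N$ depends on the spectral gap $r:=|\lambda_1(g)/\lambda_2(g)|$ and on how well-separated the fixed-point data of $g$ and $g'$ are; both quantities must be controlled in terms of $m$ alone. The separation parameter can be handled by a further quantitative escape statement, pushing $h\cdot x_g^+$ off a shrinking neighborhood of $H_g^-$. The harder part is bounding $r$ away from $1$: here the hypothesis $\Sigma\subset\SL_m(\Z)$ is decisive, since the characteristic polynomial of $g\in\Sigma^{k_1}$ has integer coefficients of bounded size, so an elementary height argument—the absolute norm of a nonzero algebraic integer is at least $1$—produces a lower bound on $\log r$ of the form $c/k_1^{O(1)}$, hence uniform in $\Sigma$ once $k_1=k_1(m)$ is fixed. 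This pins down $N$ by a constant $N(m)$, yielding an explicit $C=C(m)$ and completing the construction.
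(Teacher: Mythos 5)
The paper offers no proof of this statement: it is quoted from \cite{EMO}, with the remark that the Eskin--Mozes--Oh argument, written there for a fixed subgroup, is in fact uniform over all Zariski-dense subgroups of $\SL_m(\Z)$. Your overall architecture --- a free sub-semigroup on two generators of word length $\le C(m)$, produced by quantitative ping-pong on projective space after an effective escape from subvarieties, with integrality supplying the spectral input --- is indeed the strategy of \cite{EMO} and \cite{breuillard-gelander}. However, two of your steps fail as stated, and they are exactly the points where the real work lies.

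First, ``the subvariety of non-proximal elements'' does not exist: having a simple dominant eigenvalue is neither a Zariski-closed nor a Zariski-open condition. Already in $\SL_2(\R)$ the non-proximal elements are those with $|\tr|\le 2$, a set whose Zariski closure is all of $\SL_2$. The correct escape is from the subvariety of elements all of whose eigenvalues are roots of unity (of order bounded in terms of $m$, by integrality and bounded degree); this produces $g\in\Sigma^{k_1}$ with an eigenvalue off the unit circle, and one must then pass to a suitable exterior power $\Lambda^j\R^m$ (and possibly to $g^2$) to manufacture a genuinely proximal element. Second, and more seriously, the Kronecker--Northcott argument bounds the spectral \emph{radius} $|\lambda_1(g)|$ away from $1$ (there are only finitely many algebraic units of degree $\le m$ and Mahler measure $\le 2$), but it gives no control whatsoever on the spectral \emph{gap} $r=|\lambda_1/\lambda_2|$, which can equal $1$ (conjugate pairs) or, for $m\ge 3$, be arbitrarily close to $1$ even after passing to exterior powers; note also that $\lambda_1/\lambda_2$ need not be an algebraic integer, and the coefficients of the characteristic polynomial of $g\in\Sigma^{k_1}$ are bounded in terms of $\|\Sigma\|$, not of $m$ alone. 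The same problem affects the separation of $h\cdot x_g^+$ from $H_g^-$, which is an algebraic quantity depending on the entries of $\Sigma$. Compensating by enlarging the power $N$ makes $N$ depend on $\Sigma$ and destroys precisely the uniformity you are trying to prove. In \cite{EMO} and \cite{breuillard-gelander} this is resolved not by a further escape statement but by the Abels--Margulis--Soifer $(r,\varepsilon)$-proximality machinery applied to high powers $g^n$ (whose singular-value gaps are uniformly controlled since $\|g^n\|\ge (1+c(m))^n$ while $\det g^n=1$), combined with a compactness/limiting argument on the flag variety; this is the substantive content missing from your sketch.
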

 Although Eskin-Mozes-Oh only state their theorem in \cite{EMO} for a fixed subgroup of $\SL_m(\Z)$, their proof carries over without any changes to yield the above result uniformly over the Zariski-dense subgroups of $\SL_m(\Z)$. For the general case of our Theorem \ref{linear}, we will require the more general uniformity result established in \cite{breuillard1}, where it is shown that the rate of growth can be bounded below by a uniform constant independently of the ring of definition.

 The second is the recently established Product Theorem:
\begin{theorem}[Breuillard-Green-Tao \cite{breuillard-green-tao}, Pyber-Szab\'o \cite{pyber-szabo}]\label{spec-prod} For every $\delta>0$ there
exists  a number $N_\delta=N_\delta(m)>0$ such that for every prime number $p$
and every symmetric generating subset $A$ of $\SL_m(\Z/p\Z)$ of size at least
$p^\delta$ we have $A^{N_\delta}=\SL_m(\Z/p\Z)$.
\end{theorem}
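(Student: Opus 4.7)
\emph{Plan.} Let $\Gamma = \langle\Sigma\rangle$ be Zariski-dense in $\SL_m$. By the classical strong approximation theorem for $\SL_m$ there is a finite set $S$ of primes such that for every prime $p \notin S$ the reduction map $\pi_p \colon \Gamma \to \SL_m(\F_p)$ is surjective. The plan is to find a prime $p$ of exponential size $\sim e^{\beta n}$ such that $\pi_p(B_\Sigma(n))$ has size at least $p^\delta$, for fixed constants $\beta>0$ and $\delta>0$ depending only on $m$. By Theorem \ref{spec-prod} this will force $\pi_p(B_\Sigma(N_\delta n)) = \SL_m(\F_p)$, and the surjectivity of the trace on $\SL_m(\F_p)$ (witnessed e.g.\ by block-diagonal matrices with a $2\times 2$ companion block) then produces at least $p \geq e^{\beta n}$ distinct trace residues among elements of $B_\Sigma(N_\delta n) \subset \SL_m(\Z)$. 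After rescaling $n$ this yields the rate $c(m) = \beta/N_\delta$.

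\emph{Finding a good prime by pigeonhole.} Take $\delta = 1/2$ and $\beta := \alpha(m)/2$, where $\alpha(m)$ is the uniform growth exponent of Theorem \ref{uni-sl}, and let $\mathcal{P}_N$ be the set of primes in $[N, 2N]$ with $N := e^{\beta n}$. Call $p \in \mathcal{P}_N$ \emph{bad} if $|\pi_p(B_\Sigma(n))| < p^\delta$, and consider the collision count
\[
f(p) := |\{(g_1, g_2) \in B_\Sigma(n)^2 : g_1 \equiv g_2 \pmod{p}\}|.
\]
For bad $p$ Cauchy--Schwarz gives $f(p) \geq |B_\Sigma(n)|^2 / p^\delta$, while swapping the order of summation yields
\[
\sum_{p \in \mathcal{P}_N} f(p) \leq |B_\Sigma(n)| \cdot |\mathcal{P}_N| + |B_\Sigma(n)|^2 \cdot O(n \log\|\Sigma\| / \log N),
\]
since for $g_1 \neq g_2$ in $B_\Sigma(n)$ every nonzero entry of $g_1 - g_2$ is an integer of absolute value $\leq \|\Sigma\|^{2n}$ and thus has at most $O(n \log\|\Sigma\|/\log N)$ prime divisors of size $\geq N$. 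Combining these two bounds with $|B_\Sigma(n)| \geq e^{\alpha(m) n}$ (Theorem \ref{uni-sl}) and the prime number theorem estimate $|\mathcal{P}_N| \sim N/\log N$ contradicts the assumption that every $p \in \mathcal{P}_N$ is bad, provided $n \geq N(\Sigma,m)$. Hence some $p \in \mathcal{P}_N \setminus S$ is good.

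\emph{Conclusion and main obstacle.} For this $p$, the set $A := \pi_p(B_\Sigma(n))$ is a symmetric generating set of $\SL_m(\F_p)$ of cardinality $\geq p^\delta$, so Theorem \ref{spec-prod} gives $A^{N_\delta} = \SL_m(\F_p)$, i.e.\ $\pi_p(B_\Sigma(N_\delta n)) = \SL_m(\F_p)$. This produces $\geq p \geq e^{\beta n}$ distinct trace residues mod $p$ and hence at least that many distinct integer traces in $B_\Sigma(N_\delta n)$; the theorem follows with $c(m) := \beta/N_\delta = \alpha(m)/(2 N_{1/2}(m))$. The delicate step is the pigeonhole: the parameter $\beta$ must be chosen in terms of $m$ alone, which crucially requires the uniformity of the Eskin--Mozes--Oh lower bound (Theorem \ref{uni-sl}), since otherwise $c(m)$ would inherit a dependence on $\Sigma$ through $\|\Sigma\|$. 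Strong approximation and the Product Theorem are used as black boxes.
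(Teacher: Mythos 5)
Your proposal does not prove the statement it was asked to prove. The statement is Theorem \ref{spec-prod} itself: that for every $\delta>0$ there is $N_\delta(m)$ such that \emph{any} symmetric generating subset $A$ of $\SL_m(\Z/p\Z)$ with $|A|\geq p^\delta$ satisfies $A^{N_\delta}=\SL_m(\Z/p\Z)$. What you have written is instead a proof of Theorem \ref{partcase} (the trace-counting result for Zariski-dense subgroups of $\SL_m(\Z)$), and it \emph{invokes} Theorem \ref{spec-prod} as a black box in the concluding step (``so Theorem \ref{spec-prod} gives $A^{N_\delta}=\SL_m(\F_p)$''). Relative to the assigned task this is circular: nowhere do you give any argument for why a generating set of polynomial density must exhaust the group in a bounded number of multiplications. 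Your pigeonhole argument, strong approximation, and the surjectivity of the trace are all irrelevant to that question, which concerns an arbitrary abstract generating subset of $\SL_m(\F_p)$ with no ambient characteristic-zero group in sight.

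For the record, the paper does not reprove Theorem \ref{spec-prod} either; it cites it from Breuillard--Green--Tao and Pyber--Szab\'o, and in Section \ref{app} it records the underlying tripling statement (Theorem \ref{approximate}: either $A$ is trapped in a proper subvariety of bounded complexity, or $|AAA|\geq\min\{|\langle A\rangle|,|A|^{1+\eps}\}$), from which the fast-generation statement follows by iteration: since $|\SL_m(\F_p)|\leq p^{m^2}$ and $|A|\geq p^\delta$, at most $O\bigl(\log(m^2/\delta)/\log(1+\eps)\bigr)$ triplings are needed, giving $N_\delta$ depending only on $\delta$ and $m$. If you intended only to reconstruct Section \ref{sketch}, your argument is essentially the paper's (same pigeonhole over primes in a dyadic/exponential window, same use of Theorems \ref{uni-sl}, \ref{mvw} and \ref{spec-prod}); but as a proof of Theorem \ref{spec-prod} it is vacuous.
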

The third is the Strong Approximation Theorem:
\begin{theorem}[Matthews-Vaserstein-Weisfeiler \cite{matthews-vaserstein-weisfeiler}]\label{mvw} If $\Gamma$ is a Zariski-dense subgroup of $\SL_m(\Z)$, then for all but finitely many primes $p$, we have $\pi_p(\Gamma)=\SL_m(\Z/p\Z)$, where $\pi_p$ is the reduction mod $p$ map.
\end{theorem}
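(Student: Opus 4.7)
The plan is to assume for contradiction that $\pi_p(\Gamma)\subsetneq\SL_m(\F_p)$ for infinitely many primes $p$, and deduce that $\Gamma$ must be contained in a proper algebraic $\overline{\Q}$-subgroup of $\SL_m$, contradicting Zariski density. First, by Noetherianity of the Zariski topology, some finitely generated subgroup $\Gamma_0=\langle\gamma_1,\ldots,\gamma_k\rangle$ of $\Gamma$ already has Zariski closure equal to $\SL_m$; since $\pi_p(\Gamma_0)\subseteq\pi_p(\Gamma)$, we may replace $\Gamma$ by $\Gamma_0$ and assume $\Gamma$ is finitely generated. Write $H_p:=\pi_p(\Gamma)\leq\SL_m(\F_p)$, and let $S$ denote the (hypothetically infinite) set of primes with $H_p\subsetneq\SL_m(\F_p)$.

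The crucial structural input is a theorem on subgroups of $\SL_m(\F_p)$, due in its sharpest form to Nori, with alternative routes via Larsen--Pink or Aschbacher's classification of maximal subgroups of the finite classical groups via CFSG: there exists a constant $C=C(m)$ such that for every prime $p>C$ and every subgroup $H\leq\SL_m(\F_p)$, either (i) $H\subseteq M(\F_p)$ for some proper algebraic $\F_p$-subgroup $M\subsetneq\SL_m$ of degree at most $C$, or (ii) $|H|\leq C$. Since $\Gamma$ is infinite, for any $N>C$ we can choose $N$ distinct elements of $\Gamma$ whose reductions remain distinct for all but finitely many $p$; hence $|H_p|>C$ for all large $p$. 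Option (ii) is therefore excluded for all large $p\in S$, so for infinitely many $p\in S$ we obtain a proper $\F_p$-algebraic subgroup $M_p\subsetneq\SL_m$ of degree $\leq C$ with $H_p\subseteq M_p(\F_p)$.

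We then lift to characteristic zero. Proper algebraic subgroups of $\SL_m$ of bounded degree belong to finitely many \emph{Chevalley-type} families (parabolics of a given flag type, reductive subgroups of bounded rank, finite subgroups of bounded order, and so on), each parametrized by a quasi-projective scheme $V_\tau$ of finite type over $\Z$ and of bounded dimension. Pigeonhole yields a type $\tau$ realized by $M_p$ for infinitely many $p\in S$. The condition that the subgroup parametrized by a point of $V_\tau$ contains each $\gamma_i$ defines a closed subscheme $V_\tau^\gamma\subseteq V_\tau$ over $\Z$, and by construction $V_\tau^\gamma(\F_p)\ne\emptyset$ for infinitely many $p$. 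Since $V_\tau^\gamma$ is of finite type over $\Z$, the image of the structure morphism $V_\tau^\gamma\to\Spec\Z$ is constructible, so if it meets infinitely many closed points it must contain the generic point. A geometric point of $V_\tau^\gamma$ above the generic point yields a proper algebraic subgroup $M\subsetneq\SL_m$ defined over $\overline{\Q}$ containing every $\gamma_i$, hence all of $\Gamma$, contradicting Zariski density.

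The hard part is the structural dichotomy in the second step: the trichotomy (i)/(ii) for subgroups of $\SL_m(\F_p)$ is genuinely substantive. Nori's approach constructs an algebraic envelope $H^+$ of $H$ via exponentials of $p$-power elements, with $[H:H^+]$ bounded uniformly in $p$; the MVW original route relies on Aschbacher's classification together with CFSG. Once this is available, the pigeonhole on subgroup types and the specialization step are essentially formal, resting only on the bounded-degree parametrization and the observation that a constructible subset of $\Spec\Z$ meeting infinitely many closed points contains the generic point.
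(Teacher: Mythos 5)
The paper does not prove this statement: it is quoted verbatim as a black box from Matthews--Vaserstein--Weisfeiler, whose original argument runs through Aschbacher's description of maximal subgroups of finite classical groups and the classification of finite simple groups. Your outline is a correct sketch of the alternative, now-standard route due to Nori: reduce to a finitely generated Zariski-dense $\Gamma$, invoke the structure theorem for subgroups of $\SL_m(\F_p)$ (algebraic envelope of the subgroup generated by order-$p$ elements, plus Jordan's theorem for the prime-to-$p$ part), and then transfer the resulting bounded-complexity obstruction from infinitely many positive characteristics back to $\overline{\Q}$ by Chevalley constructibility of the image in $\Spec\Z$. The specialization step is the part you actually argue, and it is sound; the exclusion of the bounded-order case via reductions of $N>C$ distinct integer matrices is also fine. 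Be aware, though, that essentially all of the depth is concentrated in the black-boxed dichotomy, which is of the same order of difficulty as the theorem itself, so this is a reduction to Nori/Larsen--Pink rather than a self-contained proof --- which is a perfectly legitimate thing to do for a result the paper itself only cites. Two small points of precision: as literally stated, your alternative ``(i) or (ii)'' is false for $H=\SL_m(\F_p)$ itself, so it must be phrased as a trichotomy (you do apply it only to the proper subgroups $H_p$, and you later call it a trichotomy, but the displayed statement should say so); and the parametrization by ``Chevalley-type families'' is cleaner if you simply parametrize proper subvarieties of bounded complexity by the coefficient vectors of their defining equations and observe that ``the zero set is a proper subgroup containing each $\gamma_i$'' is a constructible condition over $\Z$, to which Chevalley's theorem on images applies directly.
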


Let $\Sigma$ be as in the statement of Theorem \ref{partcase} and define
$C:=\max_{s\in \Sigma}\|s\|$ where $\|T\|$ denotes the operator norm of a
matrix $T$. For every $n \in \mathbb{N}$ large enough, choose a symmetric subset $B_n$ of
$B_\Sigma(n)$ of size $e^{\alpha n}$ containing $\Sigma$, where $\alpha$ is given by Theorem \ref{uni-sl}.

We claim that there is $k_0 \in \mathbb{N}$ such that for $k \ge k_0$ there exists a
prime number $p$ in the interval $[e^{2\alpha k},e^{4\alpha k}]$
such that the restriction of the map $\pi_p:\SL_m(\Z) \rightarrow
\SL_m(\Z/p\Z)$ to $B_k$ is injective.

If $k$ is large enough, by the distribution of the prime numbers (e.g.\ Chebyshev's estimate, see Theorem \ref{cheby}), there exist at least $e^{3\alpha k}$ prime numbers in the interval
$[e^{2\alpha k},e^{4\alpha k}]$. For each $(g,h)\in B_k\times B_k$, each nonzero entry of the matrix $gh^{-1}-I_m$ is at most $C^{2k}$, so the number of its prime divisors greater than $e^{2\alpha k}$ is at most $\frac{\log(C^{2k})}{\log(e^{2\alpha k})}=\frac{\log C}{\alpha}$. We have $m^2$ entries for each $gh^{-1}$, and $\le e^{2\alpha k}$ possible pairs $(g,h)$, so the number of primes greater than $e^{2\alpha k}$ dividing at least one nonzero entry of $gh^{-1}-I_m$ for some $(g,h)\in B_k\times B_k$ is $\le \frac{\log C}{\alpha}m^2e^{2\alpha k}$, which is less than $e^{3\alpha k}$ for $k$ large enough. Thus, by the pigeon-hole principle, if $k$ is large enough, there exists a prime $p$ in $[e^{2\alpha k},e^{4\alpha k}]$ not dividing any nonzero coefficient of $gh^{-1}-I_m$ for any $(g,h)\in B_k\times B_k$. This means that $B_k$ maps injectively into $\SL_m(\Z/p\Z)$.

Let $N=N_{1/4}$ be the constant
in Theorem \ref{spec-prod}. Let $n\ge N(k_0+1)$ and $k=\lfloor n/N\rfloor$; by the above we can fix a prime $p$ so that $\pi_p|_{B_k}$ is injective.
By Theorem \ref{mvw}, $\pi_p(B_{k})$ generates $\SL_m(\Z/p\Z)$ as soon as $n$ is large enough. Moreover, we have $| \pi_p(B_{k})| \ge e^{\alpha k}\ge p^{1/4}$, so
Theorem \ref{spec-prod} implies that
$\pi_p(B_\Sigma(n))=\SL_m(\Z/p\Z)$. Since $m\ge 2$, every element of $\Z/p\Z$ is a
trace of some matrix in $\SL_m(\Z/p\Z)$; accordingly the number of traces of
elements which belong to $B_\Sigma(n)$ is at least $p \ge e^{2\alpha
k}\ge e^{\alpha n/N}$, and this yields the assertion of Theorem \ref{partcase}.


\section{Preliminaries and notation}\label{preliminaries}

\subsection{Functions}

For real-valued functions, we write $f(x)\preceq g(x)$ or
$f(x)=O(g(x))$ if for some constant $C>0$ we have $f(x)\le Cg(x)$
for all large $x$. If $f(x)\preceq g(x)\preceq f(x)$ we write
$f(x)\approx g(x)$.

\subsection{Global fields}

The letter $\K$ will always denote a global field, that is
either a number field, i.e.\ a finite extension of $\Q$, or a
function field, i.e.\ a finitely generated field of transcendence
degree one over a finite field. We denote by $\overline{\K}$ an
algebraic closure of $\K$. A place on $\K$ is the norm induced by the embedding of $\K$ into a nondiscrete locally compact field. We identify equivalent places, i.e.\ places inducing the same topology on $\K$.

Let $S$ be a nonempty finite set of places on $\K$ including all
Archimedean ones. The ring of $S$-integers of $\K$, defined as
$\O_\K(S)=\{a\in\K:\;\forall v\notin S,\; v(a)\le 1\}$ is a subring
of $\K$ whose field of fractions is $\K$. Moreover, $\O_\K(S)$ is a
finitely generated Dedekind domain. Let $\Spec(\O_\K(S))$ be the set
of its prime ideals, consisting of $\{0\}$ along with infinitely
many maximal ideals of finite index. If $\P\in\Spec(\O_\K(S))$ is
nonzero, the size of the residue field $|\P|=|\O_\K(S)/\P|$ is
called the norm of $\P$. If $\P=\{0\}$ we set $|\P|=0$.

Let $V_\K$ be the set of all places of $\K$. For every $v \in V_\K$, let $\K_v$ be the completion of $\K$ with respect to $v$. Let $\A=\prod_{v \in S} \K_v$. If $v$ is a place associated to a prime ideal $\P$ of $\O_\K$, we may choose for $|\cdot|_v$ the absolute value $|x|_v=q^{-\nu_{\P}(x)}$, where $q$ is the size of the residue field $\O_\K/\P$ and $\nu_{\P}(x)$ the $\P$-valuation of $x$, so that the product formula holds for all $x \in \K^{\times}$, $\prod_{v \in V_\K}|x|_v=1$.
Let $\|\cdot\|_v$ be the standard norm on $\K_v^d$ relative to
$|\cdot|_v$, i.e.\ the Euclidean (or Hermitian) norm if $v$ is Archimedean and the supremum norm if $v$ is non-Archimedean (i.e.\ $|x|_v=\max|x_i|_v$). We also denote by $\|\cdot\|_v$ the associated operator norm on $\GL_d(\K_v)$ and we let $\|(g_v)_v\|=\max\|g_v\|_v$ for all $g=(g_v)_v \in \GL_d(\A)$ and $|a|=\max|a_v|_v$ for all $a=(a_v)_v \in \A$.

If $S\subset S'$ then $\O_\K(S')$ is a localization of $\O_\K(S)$ and $\Spec(\O_\K(S'))$ is the complement of a finite subset of $\Spec(\O_\K(S))$. Moreover, for any $\P\in\Spec(\O_\K(S'))$ there is a canonical field isomorphism between residual fields $\O_\K(S)/(\P\cap\O_\K(S))\to\O_\K(S')/\P$. We thus write $\K_\P=\O_\K(S)/\P$. In particular, apart from finitely many primes, $\Spec(\O_\K(S))$ and its norm function do not depend on $S$ and we thus speak of ``primes of $\K$" whenever this finite indeterminacy is irrelevant. For instance we will use

\begin{theorem}[Chebyshev, Landau {\cite[Theorem 5.12]{rosen}, \cite[Theorem 7]{hardy-wright}}]\label{cheb}
Let $\K$ be a global field. Let $\pi(x)$ be the number of primes of $\K$ of norm $\le x$. Then
$$\pi(x)\approx\frac{x}{\log(x)}\quad (x\to+\infty).$$\label{cheby}
\end{theorem}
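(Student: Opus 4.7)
The plan is to treat number fields and function fields separately, and to exploit the fact that ``$\approx$'' in the paper's notation denotes two-sided bounds rather than a sharp asymptotic: this lets one bypass the full prime ideal theorem (which would require non-vanishing of $\zeta_\K$ on $\Re s = 1$) and proceed with elementary Chebyshev-style estimates.

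For a number field $\K$ of degree $d$ over $\Q$, I would first establish the ideal-counting bound
$$A(x) := \#\{\mathfrak{a} \subseteq \O_\K : N(\mathfrak{a}) \le x\} \approx x$$
by a standard geometry-of-numbers argument: each of the finitely many ideal classes contributes $c\,x + O(x^{1-1/d})$, obtained by counting lattice points (under the Minkowski embedding) in a homogeneously expanding fundamental domain for the unit action, whose cocompactness comes from Dirichlet's unit theorem. Unique factorization of ideals then allows one to imitate Chebyshev's classical trick over $\Q$: comparing $\log\prod_{N(\mathfrak{a}) \le x} N(\mathfrak{a})$ to a telescoping prime-power sum yields
$$\psi_\K(x) := \sum_{|\mathfrak{p}|^k \le x} \log|\mathfrak{p}| \approx x,$$
and partial summation converts this into $\pi(x) \approx x/\log x$.

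For a function field $\K = \F_q(C)$, with $C$ a smooth projective geometrically integral curve of some genus $g$ over $\F_q$, primes of $\K$ correspond, up to a finite discrepancy irrelevant to the asymptotics, to closed points of $C$, and a closed point of degree $n$ has norm $q^n$. Here I would invoke the Hasse--Weil bound $|\#C(\F_{q^n}) - q^n - 1| \le 2g\, q^{n/2}$, apply Möbius inversion to the identity $n \cdot N_n = \sum_{d\mid n}\mu(n/d)\,\#C(\F_{q^d})$ to extract the count $N_n = q^n/n + O(q^{n/2}/n)$ of closed points of degree exactly $n$, and then sum over $n \le \lfloor\log_q x\rfloor$ to obtain $\pi(x)\approx x/\log x$. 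For only a two-sided bound the full strength of Hasse--Weil is unnecessary; a much weaker Riemann--Roch estimate $\#C(\F_{q^n}) \le q^n + O(q^{n/2})$ suffices.

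The main obstacle is the lattice-point estimate $A(x)\approx x$ in the number field case, which is classical (Weber, Landau) but requires care to handle uniformity across the finitely many ideal classes and to quotient properly by the full unit group; once this is in hand, everything else is routine manipulation of Chebyshev type. Since the theorem is part of the classical analytic theory of global fields, the paper wisely just defers to the standard references.
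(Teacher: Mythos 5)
The paper offers no proof of this theorem: it simply defers to \cite[Theorem~5.12]{rosen} for the function-field case and to \cite[Theorem~7]{hardy-wright} for $\Q$ (the general number-field case, which the paper also needs, is a theorem of Landau). So there is no ``paper proof'' to compare your sketch against; what you have written is a genuine outline of the underlying mathematics.

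In broad strokes your outline is correct. The number-field half is Landau's elementary approach: ideal counting $A(x)=cx+O(x^{1-1/d})$ from geometry of numbers and Dirichlet's unit theorem, then Chebyshev-type manipulation via the identity $\sum_{N(\mathfrak{a})\le x}\log N(\mathfrak{a}) = \sum_{N(\mathfrak{b})\le x}\Lambda(\mathfrak{b})\,A(x/N(\mathfrak{b}))$. One caution: unlike $\Q$, where $A(y)-2A(y/2)=\lfloor y\rfloor-2\lfloor y/2\rfloor\in\{0,1\}$, here the error $E(y)=A(y)-cy$ is only $O(y^{1-1/d})$, so $A(y)-2A(y/2)$ is unbounded and the naive difference $T(x)-2T(x/2)$ does not directly sandwich $\psi_\K(x)-\psi_\K(x/2)$. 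The argument still closes, but it genuinely uses the power-saving error term you quote and is less ``routine'' than the rational case; so you were right to flag the lattice-point estimate as the crux, but the subsequent Chebyshev step also deserves care.

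The one concrete slip is your closing remark on the function-field side, that the one-sided bound $\#C(\F_{q^n})\le q^n+O(q^{n/2})$ ``suffices.'' It does not: from the M\"obius inversion you write down, an upper bound on $\#C(\F_{q^n})$ only controls $N_n$, and hence $\pi(x)$, from above. The lower bound $\pi(x)\gg x/\log x$ needs $\#C(\F_{q^n})\gg q^n$ for all large $n$, and Riemann--Roch plus rationality of the zeta function alone only give the inverse zeros $\alpha_i$ the crude bound $|\alpha_i|\le q$, which is compatible with $\#C(\F_{q^n})=o(q^n)$ along subsequences. The two-sided estimate $\#C(\F_{q^n})=q^n+O(q^{n/2})$ \emph{is} Hasse--Weil; it is not a weaker Riemann--Roch fact. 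Your primary route via Hasse--Weil (which is also the route in Rosen's book) is the right one; the parenthetical shortcut should be dropped or replaced by an honest statement that only the upper bound on $\pi(x)$ is elementary.
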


This is a weak form of the prime number theorem, which asserts that $\frac{\pi(x)}{x/\log(x)}$ actually tends to~1. Theorem \ref{cheby} has the following consequence.

\begin{lemma}\label{fgen1}
Let $\K_0\subset\K$ be an extension of global fields. Then the
number of primes $\P$ of $\K$ of norm $\le x$ such that $f_\P>1$,
where $f_\P=[\O_\K/\P:\O_{\K_0}/(\P\cap\O_{K_0})]$ is the residual
degree, is $o(\sqrt{x})$. In particular, the number of primes of
$\K$ of norm $\le x$ and with $f_\P=1$ is $\approx x/\log(x)$.
\end{lemma}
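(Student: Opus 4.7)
The plan is to bound the count of primes with $f_\P>1$ by reducing to a count in the smaller field $\K_0$, exploiting that large residual degree forces the prime below to have small norm.

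First I would recall the fundamental relation $|\P|=|\P_0|^{f_\P}$, where $\P_0=\P\cap\O_{\K_0}$. Indeed $\O_\K/\P$ is a degree-$f_\P$ extension of $\O_{\K_0}/\P_0$ by the definition of $f_\P$, so the two residue field cardinalities are related in this way. Consequently any prime $\P$ of $\K$ with $|\P|\le x$ and $f_\P\ge 2$ sits above a prime $\P_0$ of $\K_0$ with $|\P_0|\le x^{1/2}$.

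Next I would use the elementary bound: for a fixed prime $\P_0$ of $\K_0$, the number of primes of $\K$ lying above $\P_0$ is at most $[\K:\K_0]$. (This follows from the standard identity $\sum_{\P\mid\P_0}e_\P f_\P=[\K:\K_0]$, valid outside the finitely many primes that need to be thrown in $S$; we can absorb the finite indeterminacy into the $O$-constant as allowed by the remark preceding Theorem~\ref{cheby}.) Putting this together with the previous step, the number of primes $\P$ of $\K$ with $|\P|\le x$ and $f_\P>1$ is at most $[\K:\K_0]\cdot\pi_{\K_0}(x^{1/2})$, where $\pi_{\K_0}$ counts primes of $\K_0$.

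Applying Theorem~\ref{cheby} to $\K_0$, this is $\preceq x^{1/2}/\log(x^{1/2})=O(\sqrt{x}/\log x)$, which is indeed $o(\sqrt{x})$. For the ``in particular'' clause, subtract this $o(\sqrt{x})$ estimate from $\pi_\K(x)\approx x/\log x$ given by Theorem~\ref{cheby} applied to $\K$; since $o(\sqrt{x})$ is negligible compared to $x/\log x$, the count of primes with $f_\P=1$ is still $\approx x/\log x$. There is no serious obstacle here — the only mild subtlety is tracking that ``prime of $\K$'' really means nonzero element of $\Spec(\O_\K(S))$, but as emphasized in the paragraph preceding the lemma, the choice of $S$ affects only finitely many primes and is irrelevant to the asymptotics.
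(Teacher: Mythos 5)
Your proof is correct and follows essentially the same route as the paper's: use $|\P|=|\P_0|^{f_\P}$ to reduce to counting primes of $\K_0$ of norm $\le\sqrt{x}$, bound the fiber size by $[\K:\K_0]$, and apply Chebyshev (Theorem~\ref{cheby}) to $\K_0$. You add slightly more detail (the $\sum e_\P f_\P=[\K:\K_0]$ justification, the finite-$S$ indeterminacy, and the explicit subtraction for the ``in particular'' clause), but the argument is the same.
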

\begin{proof}

Let $\mathfrak{p}=\P \cap \O_{\K_0}$ be the prime below $\P$. We
have $|\P| = |\mathfrak{p}|^{f_\P}$. If $f_\P \geq 2$ and $|\P|
\leq x$ it follows that $|\mathfrak{p}| \leq \sqrt{x}$. Since there
are at most $[\K:\K_0]$ primes $\P$ above any given prime of
$\O_{\K_{0}}$, by Chebyshev's theorem there are at most
$O(\sqrt{x}/\log(x))$ primes $\P$ of $\K$ with $f_\P \geq 2$ and $|\P| \leq x$. We are done.
\end{proof}

\subsection{Reduction modulo a prime}\label{reductionprime}

Let $\K$ be a global field. Let $\G$ be a linear algebraic group defined over $\K$. We want to define ``reduction modulo~$\P$" for $\G$.
Let $A$ be a finitely generated subdomain with $\K$ as field of fractions.
We can write the ring of functions as
$\K[\G]=M\otimes_{A_b}\K$, where $A_b$ is a suitable localization of
$A$ and $M\subset\K[\G]$ a Hopf algebra over $A_b$. This choice
being made, we write $A_b[\G]$ instead of $M$. Thus for any $A_b$-algebra $B$ we
can define functorially $\G(B)=\Hom(A_b[\G],B)$ which is naturally a
group. In particular $\G(A/\P)$ is well-defined for every $\P \in
\Spec(A_b)$, and the reduction mod~$\P$ map is the group
homomorphism $\G(A_b)\to\G(A_b/\P)$.

This depends on the choice of the Hopf algebra structure $M$ over
$A_b$; if two different choices $M_i$ over $A_{b_i}$ are made giving
rise to forms $\G_i$ of $\G$ over $A_{b_i}$, the identity induces an
isomorphism $M_1\otimes_{A_{b_1}}\K\simeq M_2\otimes_{A_{b_2}}\K$;
such an isomorphism is actually defined over a suitable common
localization $A_b$, and in particular, restricted to the class of
$A_b$-algebras, the functors $B\mapsto\Hom(M_i,B)$ are equivalent
for $i=1,2$. Given two fixed choices $M_i$ over $A_{b_i}$, $i=1,2$, the group scheme structures will coincide for all but finitely many $\P$'s.
Similarly, if $\Gamma$ is a finitely
generated subgroup of $\G(\K)$, then for $\P$ large enough, we can talk about  the
homomorphism $\Gamma\to\G(\K_\P)$, where $\K_\P=\O_\K(S)/\P$.

Moreover, $A_b[\G]\otimes_{A_b} \overline{\K}$ is a reduced ring and is
a domain if $\G$ is connected. This continues to hold modulo $\P$ for
$\P$ large enough, namely $\G$ is reduced over $\K_\P$, and is connected
if $\G$ is connected. Indeed, since $A_b[\G]$ is a flat $A_b$-module (if
we suppose as we may that $A_b$ is Dedekind, then flat means
torsion-free), ``geometrically reduced" and ``geometrically integral"
are open properties on $\Spec(A_b)$ \cite[12.1.1]{grothendieck}.

Finer arguments of the same flavour show that if $\G$ is semisimple and simply connected, then this still holds over $\K_\P$ for large $\P$.

\section{Reduction and pigeonholing on prime ideals}\label{pigeonhole}

In this section, we describe a pigeonhole argument (Corollary
\ref{pigeon2} below). In combination with Chebyshev's weak version
of the prime number theorem for global fields, this will yield many
good prime ideals modulo which the ``ball" $\Sigma^n$ will be
preserved.

As above $\K$ denotes a global field and $S$ a finite set of places including all Archimedean ones. Let $\{B_n\}_n$ be a family of finite subsets of $\GL_d(\O_\K(S))$ such that:

 \begin{itemize}
 \item $B_n\subset W^n(=W \cdot...\cdot W)$ for some finite subset $W$ of $\GL_d(\O_\K(S))$;
 \item $|B_n| \geq e^{\alpha n}$ for some fixed $\alpha >0$.
 \end{itemize}

The reader interested in a proof of Theorem \ref{mainthm} under the assumption that the Zariski closure of $\langle\Sigma\rangle$ is connected semisimple, can always suppose, in the forthcoming results, that $B_n=\Sigma^n$. This is, in particular, enough in order to obtain Theorem \ref{linear} (that is exponential conjugacy growth without the uniformity in the field claimed in Theorem \ref{mainthm}), because every non-virtually solvable linear group has a finite index subgroup with a quotient isomorphic to a Zariski-dense subgroup of a simple algebraic group.

Given a prime ideal $\P$ not in $S$, let $\pi_{\P}$ be the reduction mod $\P$ map from $\GL_d(\O_\K(S))$ to $\GL_d(\F_q)$, where $\F_q=\O_\K/\P$.

\begin{proposition}
Suppose  that $B_n \subset W^n$ are sets as above. There exists a constant $C=C(W,S)>0$ such that for all $n$, all $\gamma\in B_n^{-1}B_n$ and $\rho>1$ we have
$$\kappa_{\rho^n}(\gamma)\le\frac{C}{\log(\rho)},$$
where $\kappa_{\rho^n}(\gamma)$ is the number of primes $\P$ with $|\P| \geq \rho^{n}$ such that $\pi_{\P}(\gamma)=1$.
\end{proposition}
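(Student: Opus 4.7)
The plan is to bound $\kappa_{\rho^n}(\gamma)$ by the number of primes of norm at least $\rho^n$ dividing a single fixed nonzero entry of $\gamma-I$, and then use the product formula on $\K$ to control the latter by the $S$-adic size of that entry. This is essentially the $S$-integers-of-a-global-field analogue of the elementary bound over $\Z$ used in the sketch of Theorem~\ref{partcase}.

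Since $\gamma\in B_n^{-1}B_n\subset (W\cup W^{-1})^{2n}$, the only case with content is $\gamma\neq 1$ (otherwise the bound is vacuous). Then some entry $a:=(\gamma-I)_{ij}\in\O_\K(S)$ is nonzero, and $\pi_\P(\gamma)=1$ forces $\nu_\P(a)\geq 1$; hence
$$\kappa_{\rho^n}(\gamma)\;\leq\;\#\bigl\{\P\in \Spec(\O_\K(S))\setminus\{0\}:\; \P\mid a,\; |\P|\geq\rho^n\bigr\}.$$
The product formula $\prod_{v\in V_\K}|a|_v=1$, together with $\nu_\P(a)\geq 0$ for all $\P$ corresponding to places outside $S$ (because $a\in\O_\K(S)$), then gives
$$\sum_{v\in S}\log|a|_v\;=\;\sum_{\P\notin S}\nu_\P(a)\log|\P|\;\geq\;n(\log\rho)\,\kappa_{\rho^n}(\gamma),$$
since each prime $\P$ with $\P\mid a$ and $|\P|\geq\rho^n$ contributes at least $n\log\rho$ to the middle sum.

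It remains to show $\sum_{v\in S}\log|a|_v\leq C_1(W,S)\cdot n$. Submultiplicativity of the operator norm at each place gives $\|\gamma\|_v\leq M_v^{2n}$, where $M_v:=\max_{w\in W\cup W^{-1}}\|w\|_v$, and entries are dominated by operator norms, so $|a|_v\leq\|\gamma-I\|_v\leq M_v^{2n}+1$. Summing $\log|a|_v$ over the finite set $S$ yields the claimed linear-in-$n$ bound, and combining with the previous inequality gives $\kappa_{\rho^n}(\gamma)\leq C_1/\log\rho$, as required.

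The only step with any substance is the product formula argument, which is precisely what converts the cheap geometric control on the entries of $\gamma-I$ (provided by the word length) into arithmetic control on the supply of large-norm primes that can divide those entries. No serious obstacle is anticipated; the rest is bookkeeping over the finite place set $S$ and a choice of index $(i,j)$ with $(\gamma-I)_{ij}\neq 0$.
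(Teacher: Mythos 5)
Your proof is correct and follows essentially the same route as the paper: isolate a nonzero $S$-integral entry of $\gamma-I$, observe that every prime $\P$ with $\pi_\P(\gamma)=1$ divides it, apply the product formula to convert the archimedean/$S$-adic size of that entry (controlled linearly in $n$ by submultiplicativity of the norms) into a bound on the number of large-norm prime divisors. The only cosmetic difference is that you work with $\sum_{v\in S}\log|a|_v$ for a single entry while the paper phrases it as $\|\gamma-1\|^{|S|}\geq |\P_1|\cdots|\P_k|$, using $\max_{v\in S}|\cdot|_v$ raised to the $|S|$; these are the same product-formula estimate. (One small correction: for $\gamma=1$ the bound is not ``vacuous'' but simply inapplicable, since $\kappa_{\rho^n}(1)$ is infinite; the proposition is implicitly restricted to $\gamma\neq 1$, as the paper's proof and its use in Corollary \ref{pigeon2} make clear.)
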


\begin{proof} We make use of the following easy consequence of the product formula: if $\P$ is a prime ideal in $\O_\K(S)$, then $|x|^{|S|} \geq |\P|$ for any $x \in \P\setminus \{0\}$. Similarly, if $g \in \GL_d(\O_{\K}(S))$, $g \neq 1$, and $g-1 \in M_d(\P_i)$ for $k$ distinct primes ideals $\P_1$,...,$\P_k$ not in $S$, then $\|g-1\|^{|S|}\geq |\P_1|...|\P_k|$.
So if $\pi_{\P_i}(\gamma)=1$ for each $\P_1$,...,$\P_k$, then
$\|\gamma-1\|^{|S|}\geq \rho^{nk}$. But $\|\gamma-1\| \leq 1+M^{2n} \leq M^{3n}$, where $M:=\max\{\|g\|,g \in W\}$. Hence the result.
\end{proof}

We then derive:

\begin{corollary}\label{pigeon2}\label{pigeon} With probability tending to $1$ as $n$ tends to infinity, a prime $\P$ of $\K$ whose norm $|\P|$ lies in the interval $[e^{3\alpha n},e^{4\alpha n}]$ must satisfy $|\pi_{\P}(B_n)| \geq |\P|^{\frac{1}{4}}$.
\end{corollary}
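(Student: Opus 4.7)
The plan is to use a double-counting argument combining the preceding Proposition with a Cauchy--Schwarz estimate on fiber sizes, then compare the resulting bound on bad primes to Chebyshev's theorem. Set $\mathcal{B}_n := \{\P : |\P| \in [e^{3\alpha n}, e^{4\alpha n}], \, |\pi_\P(B_n)| < |\P|^{1/4}\}$. By Theorem \ref{cheby}, the total number of primes of $\K$ in this interval is $\approx e^{4\alpha n}/(4\alpha n)$, so it suffices to prove $|\mathcal{B}_n| = o(e^{4\alpha n}/n)$. Note that for any $\P \in \mathcal{B}_n$ one has $|\pi_\P(B_n)| < |\P|^{1/4} \leq e^{\alpha n} \leq |B_n|$, so the restriction $\pi_\P|_{B_n}$ is not injective, forcing the existence of nontrivial elements of $B_n^{-1}B_n \cap \ker\pi_\P$.

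I count the triples $(g,h,\P)$ with $g,h \in B_n$, $g \neq h$, $\P \in \mathcal{B}_n$, and $\pi_\P(g)=\pi_\P(h)$ in two ways. For the upper bound, each such pair $(g,h)$ yields $\gamma := g^{-1}h \in B_n^{-1}B_n \setminus \{1\}$, and applying the Proposition with $\rho = e^{3\alpha}$ shows that $\pi_\P(\gamma)=1$ holds for at most $C/(3\alpha)$ primes $\P$ with $|\P| \geq e^{3\alpha n}$; summing over pairs yields a total of at most $|B_n|^2 \cdot C/(3\alpha)$ triples. For the lower bound, fix $\P \in \mathcal{B}_n$: the Cauchy--Schwarz estimate $\sum_i f_i^2 \geq |B_n|^2/|\pi_\P(B_n)|$ on the fiber sizes of $\pi_\P|_{B_n}$ gives at least $|B_n|^2/|\pi_\P(B_n)| - |B_n| > |B_n|^2/e^{\alpha n} - |B_n|$ such triples involving $\P$.

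Combining the two bounds gives $|\mathcal{B}_n|\bigl(|B_n|^2/e^{\alpha n} - |B_n|\bigr) \leq |B_n|^2 \cdot C/(3\alpha)$. When $|B_n| \geq 2 e^{\alpha n}$ this rearranges to $|\mathcal{B}_n| \leq 2Ce^{\alpha n}/(3\alpha)$. In the residual regime $|B_n| < 2e^{\alpha n}$ the Cauchy--Schwarz lower bound degenerates, but here I fall back on the observation that every bad prime still forces at least one nontrivial $\gamma \in B_n^{-1}B_n$ with $\pi_\P(\gamma)=1$; applying the Proposition then gives $|\mathcal{B}_n| \leq |B_n^{-1}B_n| \cdot C/(3\alpha) \leq |B_n|^2 \cdot C/(3\alpha) < 4Ce^{2\alpha n}/(3\alpha)$. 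In either case $|\mathcal{B}_n| = O(e^{2\alpha n}) = o(e^{4\alpha n}/n)$, yielding the claim.

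The main obstacle is precisely this edge regime in which $|B_n|$ is tight at the lower bound $e^{\alpha n}$: then each bad prime might produce only a single collision and the pair-counting lower bound essentially vanishes. Fortunately in that case $|B_n^{-1}B_n| \leq |B_n|^2$ is itself only of order $e^{2\alpha n}$, safely below the Chebyshev count $\sim e^{4\alpha n}/n$, so the trivial collision count carries the argument.
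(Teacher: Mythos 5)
Your proof is correct, but it takes a more roundabout route than the paper's. The paper's key simplification is to work not with all of $B_n$ but with a fixed subset $P_n\subset B_n$ of size exactly $e^{\alpha n}$: a bad prime $\P$ (one with $|\pi_\P(B_n)|<|\P|^{1/4}\le e^{\alpha n}=|P_n|$) then fails to be injective on $P_n$, so some nontrivial $\gamma\in P_n^{-1}P_n$ has $\pi_\P(\gamma)=1$. Since each such $\gamma$ is killed by at most $C/(3\alpha)$ primes of norm $\ge e^{3\alpha n}$, the number of bad primes is at most $(C/3\alpha)|P_n|^2=O(e^{2\alpha n})$, which is $o(e^{4\alpha n}/n)$ by Chebyshev, and the proof is done with a single trivial pair count. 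By keeping all of $B_n$ you are forced to control $|B_n^{-1}B_n|$, which has no a priori bound, and this is exactly why you need the Cauchy--Schwarz estimate on fiber sizes together with the case split according to whether $|B_n|\ge 2e^{\alpha n}$. Your argument does close that gap — the Cauchy--Schwarz bound $|\mathcal{B}_n|\le 2Ce^{\alpha n}/(3\alpha)$ in the large-$|B_n|$ regime is in fact sharper than what the paper obtains — but the extra precision is unnecessary here, and your fallback case is essentially the paper's argument applied to $B_n$ rather than to a trimmed subset. The truncation-to-$P_n$ device would have let you bypass the double counting and the dichotomy entirely.
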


\begin{proof} Let $P_n$ be a subset of $B_n$ of size $e^{\alpha n}$. If $\pi_{\P}$ is not injective on $P_n$, then there must exist $\gamma \in P_n^{-1}P_n$ such that $\pi_{\P}(\gamma)=1$ while $\gamma \neq 1$. However by the last proposition, there are at most $\kappa:=C/3\alpha$ such prime $\P$ with norm $|\P| \geq e^{3\alpha n}$. Hence there are at most $\kappa|P_n|^2=O(e^{2\alpha n})$ possibilities for such a prime. However, by Chebyshev's theorem (Theorem \ref{cheb} above), there are $\approx e^{4 \alpha n} / n$ primes with norm in $[e^{3\alpha n},e^{4\alpha n}]$. Hence for most such primes $\pi_{\P}$ is injective on $P_n$, and thus $|\pi_{\P}(B_n)| \geq e^{\alpha n} \geq |\P|^{\frac{1}{4}}$.
\end{proof}

\section{Approximate subgroups and fast generation in semisimple algebraic groups}\label{app}

One of the key ingredients in the proof of our main theorem, is the following recent result regarding approximate subgroups of simple algebraic groups over finite fields.

Let $\G\subset\GL_d$ be an algebraic group defined over an algebraically closed field $k$. We will say that a closed algebraic subvariety $\mathcal{V}$ of $\G$ has bounded \emph{complexity} (say bounded by $M\geq 1$) if it is defined as the set of zeros of at most $M$ polynomial maps on $\G$ of degree at most $M$. We will also say that a subset of $\G$ is $M$-sufficiently Zariski dense if it is not contained in a proper closed algebraic subvariety of $\G$ of complexity at most $M$. For more details about this definition, we refer the reader to \cite{breuillard-green-tao} especially Section 3 and Appendix A therein.

The following was obtained in \cite{breuillard-green-tao}.

\begin{theorem}[Product Theorem]\label{approximate}
Let $\G$ be a (connected) almost simple linear algebraic group of
dimension $d$ defined over an algebraically closed field $k$. There
exist constants $\eps,C>0$, depending only on $d$ and not on $k$,
such that the following holds. Let $A$ be a finite subset of
$\G(k)$, then
\begin{itemize}
\item either $\langle A \rangle$ is not $C$-sufficiently Zariski-dense in $\G$, that is $A$ is contained in a proper algebraic subgroup of $\G$ of complexity at most $C$.
\item or $|AAA| \geq \min\{|\langle A \rangle|, |A|^{1+\eps}\}$.
\end{itemize}
\end{theorem}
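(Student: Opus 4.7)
My plan is to follow the strategy implemented in \cite{breuillard-green-tao} and \cite{pyber-szabo}, structured in three stages. \emph{Stage 1 (reduction to approximate groups).} I would first use the non-commutative Pl\"unnecke--Ruzsa machinery (due to Tao) to reduce the theorem to the following core statement: every $K$-approximate subgroup $A \subset \G(k)$ that is $C(d)$-sufficiently Zariski dense in $\G$ has $|A|\leq K^{N(d)}$. The implication is routine Ruzsa calculus: if $|AAA|<|A|^{1+\eps}$ and $|AAA|<|\langle A\rangle|$, one extracts a $K^{O(1)}$-approximate subgroup $H\subset A^{O(1)}$ with $|H|\asymp |A|$; provided $\eps$ is small enough, $H$ remains sufficiently Zariski dense and the core statement applies to it.

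\emph{Stage 2 (escape from subvarieties).} The key technical tool is a quantitative refinement of the Eskin--Mozes--Oh / Breuillard--Gelander escape lemma: if $A\subset\G(k)$ is $C$-sufficiently Zariski dense and $V\subsetneq\G$ has complexity at most $M$, then some word in $A$ of length bounded by $\ell(C,M,d)$ lies outside $V$. I would prove this iteratively: if $a_0\in V$, I seek $a_1\in A$ with $a_0 a_1\notin V$ (otherwise $A\subset a_0^{-1}V$, contradicting sufficient Zariski density when $C$ is large enough), carefully tracking degrees of the intermediate varieties so that the resulting word-length bound depends only on $C,M,d$ and not on $k$.

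\emph{Stage 3 (dimension induction via pivoting).} I would then argue by induction on $\dim\G$. Applying Stage 2 to a suitable discriminant variety, I produce a regular semisimple element $s\in A^{O_d(1)}$ whose centralizer $T=Z_\G(s)$ is a maximal torus, and then study the map $\Phi\colon A\to\G/N_\G(T)$, $a\mapsto aTa^{-1}$. Either some fiber is large, i.e.\ $|A\cap gN_\G(T)|\geq |A|^{1-\eps}$ for some $g$, in which case $A\cap gN_\G(T)$ is an approximate coset of the proper algebraic subgroup $N_\G(T)$ (of strictly smaller dimension and solvable-by-finite), so the inductive hypothesis, together with the easy Product Theorem for solvable groups, contradicts the Zariski density of $\langle A\rangle$; or all fibers of $\Phi$ are small, in which case a double-counting argument shows that the conjugates $aTa^{-1}$ for $a\in A$ are generically distinct, producing at least $|A|^{1+\eps}$ distinct tori inside $ATA^{-1}$ and hence $|AAA|\gg |A|^{1+\eps}$.

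The main obstacle will be enforcing that every constant depends only on $d$, not on $k$. This forces me to work throughout with a quantitative scheme-theoretic notion of complexity (as developed in Appendix A of \cite{breuillard-green-tao}): dimensions and degrees of centralizers, stabilizers, conjugacy classes, and orbit closures must all carry explicit bounds, and the escape time in Stage 2 must be controlled uniformly in $k$. The almost-simplicity hypothesis of $\G$ is essential: it guarantees that every proper algebraic subgroup encountered has strictly smaller dimension (so the induction terminates), and that the centre of $\G$ cannot absorb a definite proportion of $A$, which would otherwise sabotage both the pivot step and the application of the solvable case.
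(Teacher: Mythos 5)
The paper does not actually prove this theorem: it is stated verbatim and attributed to \cite{breuillard-green-tao}, with the remark that \cite{pyber-szabo} independently obtained the case $k=\overline{\F_p}$, $A$ generating $\G(\F_q)$, which is the only case used. There is therefore no ``paper's own proof'' to compare against; the relevant question is whether your sketch accurately reflects the strategy of those cited works.

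At the level of a sketch, you have captured the main architecture of the Breuillard--Green--Tao / Pyber--Szab\'o argument: the Ruzsa-calculus reduction to approximate subgroups, the uniform-in-$k$ escape-from-subvarieties lemma with quantitative complexity bounds (Appendix A of \cite{breuillard-green-tao}), and the pivoting/dimension-induction step built around maximal tori of regular semisimple elements. The emphasis on tracking complexity (degree and dimension) rather than constants depending on the base field, and the observation that almost-simplicity is what makes the dimension induction terminate and keeps the centre from absorbing mass, are both accurate and are indeed the delicate points. Two cautions on Stage 3: first, the actual argument in those papers is not a clean dichotomy on fibres of $a\mapsto aTa^{-1}$ but rather a Larsen--Pink-type dimensional inequality $|A\cap V|\lesssim_M |A^{O(1)}|^{\dim V/\dim\G}$ for subvarieties $V$ of bounded complexity $M$, proved by induction and applied to $V=T$ and to conjugacy classes; the pivoting then amplifies $|A\cap T|$ against conjugate tori. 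Your fibre formulation is in the right spirit but elides how one actually gets a contradiction from a large fibre (one needs the Larsen--Pink bound in the opposite direction, not just the solvable case plus induction on dimension). Second, the reduction in Stage 1 must be arranged so that the extracted approximate group $H$ still \emph{generates} a sufficiently Zariski-dense subgroup, not merely that $H$ itself is not contained in a low-complexity subvariety; this is where the uniform escape lemma has to be applied already inside Stage 1. None of this affects the correctness of the paper, which only uses the theorem as a black box via Corollary~\ref{fastgeneration}.
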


The above was obtained independently by Pyber and Szab\'o (\cite{pyber-szabo}) in the case when $k=\overline{\F_p}$
 and $A$ generates $\G(\F_q)$, which is the hardest case and the only one we will use in this paper.

As a direct consequence, we get:

\begin{corollary}\label{fastgeneration} Let $\mathbf{H}$ be a simple algebraic group defined over a finite field $\F_q$, of dimension at most $d$. Let $\beta>0$. Then there is $D=D(\beta,d)>0$ such that the following holds: if $A$ is a finite generating subset of $\mathbf{H}(\F_q)$ such that $|A|\geq q^{\beta}$, then $A^{D}=\mathbf{H}(\F_q)$.
\end{corollary}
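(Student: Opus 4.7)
The plan is to apply Theorem~\ref{approximate} iteratively to $A$. First, replacing $A$ by $A \cup A^{-1} \cup \{1\}$, which at most triples the cardinality and hence preserves the bound $|A| \geq q^{\beta/2}$ once $q$ is larger than some absolute constant, we may assume $A$ is symmetric and contains the identity. Then the iterates $A_k := A^{3^k}$ form an increasing chain, each containing $A$ and hence each still generating $\mathbf{H}(\F_q)$ as a group.

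Next, it suffices to treat the case $q \geq q_0$ for some threshold $q_0 = q_0(\beta, d)$, because for $q \leq q_0$ the group $\mathbf{H}(\F_q)$ has cardinality at most $c(d) q_0^d$, and any symmetric generating set containing~$1$ saturates a finite group within at most its order many steps, yielding a trivial uniform bound on $D$. For $q > q_0$ with $q_0$ taken large enough, we claim that the first alternative of Theorem~\ref{approximate} cannot hold for any $A_k$. Indeed, any proper algebraic subgroup of $\mathbf{H}$ of complexity at most $C$ (the constant appearing in Theorem~\ref{approximate}) has dimension at most $d-1$ and uniformly bounded degree, so by the Lang--Weil estimates it contains only $O_{C,d}(q^{d-1})$ points of $\mathbf{H}(\F_q)$, whereas $|\mathbf{H}(\F_q)| \geq c'(d) q^d$. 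Hence for $q > q_0$ no such subgroup can contain all of $\mathbf{H}(\F_q)$, and in particular cannot contain $A_k$.

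Consequently, the second alternative of Theorem~\ref{approximate} applies to each $A_k$, giving
\[
|A_{k+1}| = |A_k A_k A_k| \geq \min\bigl(|\mathbf{H}(\F_q)|,\; |A_k|^{1+\eps}\bigr),
\]
with $\eps = \eps(d) > 0$. By induction, $|A_k| \geq \min\bigl(|\mathbf{H}(\F_q)|,\; q^{\beta(1+\eps)^k}\bigr)$. Since $|\mathbf{H}(\F_q)| \leq c(d)\, q^d$, choosing $k_0 = k_0(\beta, d)$ such that $\beta(1+\eps)^{k_0} \geq d + \log_2 c(d)$ forces $|A_{k_0}| \geq |\mathbf{H}(\F_q)|$, and because $A_{k_0} \subseteq \mathbf{H}(\F_q)$ we conclude $A_{k_0} = \mathbf{H}(\F_q)$. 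Taking $D := 3^{k_0}$ (adjusted by the symmetrization factor) finishes the proof. The only real obstacle is ensuring that the first alternative of the Product Theorem can be discarded along the iteration; this is handled uniformly by the Lang--Weil point count once $q$ is large, the remaining bounded-$q$ regime being trivial.
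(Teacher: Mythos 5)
Your proof is essentially the standard derivation the paper has in mind when calling the corollary a direct consequence of the Product Theorem: symmetrize, iterate $A\mapsto A^3$, and observe that once the first alternative of Theorem~\ref{approximate} is ruled out the sizes $|A_k|$ grow at rate $1+\eps$ per step, so that $\mathbf{H}(\F_q)$ is exhausted after a number of steps depending only on $\beta$ and $d$. The structure and the inductive bound $|A_k|\ge\min\bigl(|\mathbf{H}(\F_q)|,\,q^{\beta(1+\eps)^k}\bigr)$ are correct.

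Two small points are worth tightening. First, the remark that symmetrization ``preserves $|A|\ge q^{\beta/2}$'' is a red herring: $A\cup A^{-1}\cup\{1\}\supseteq A$, so the lower bound $q^\beta$ is preserved outright and no loss of exponent is incurred. Second, and more substantively, when ruling out the first alternative you invoke a Lang--Weil bound for the proper algebraic subgroup $M$ of complexity $\le C$ supplied by Theorem~\ref{approximate}; but $M$ is a priori defined only over $\overline{\F_q}$, whereas Proposition~\ref{langweilremix} counts $\F_q$-points of a variety defined over $\F_q$. This gap is easily bridged: if a nonzero $f\in\overline{\F_q}[\mathbf{H}]$ of degree $\le C$ vanishes on $\mathbf{H}(\F_q)$, then since the kernel of a matrix with entries in $\F_q$ has the same dimension over $\F_q$ as over $\overline{\F_q}$, there is also a nonzero $g\in\F_q[\mathbf{H}]$ of degree $\le C$ vanishing on $\mathbf{H}(\F_q)$; applying Proposition~\ref{langweilremix} to the hypersurface $\{g=0\}$ gives $|\mathbf{H}(\F_q)|=O_{C,d}(q^{d-1})$, contradicting $|\mathbf{H}(\F_q)|\ge c'(d)q^d$ once $q>q_0(d)$. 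With that one extra sentence the argument is complete.
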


\section{Strong approximation}\label{sapp}
To apply Corollary \ref{fastgeneration}, we need to know that $\Gamma$ maps onto many mod~$\P$ quotients. This is a consequence of the so-called ``strong approximation", a result due to Weisfeiler \cite{weisfeiler}, except some tricky cases due to the existence of ``non-standard isogenies" in characteristic two or three, and the general result is due to Pink \cite{pink}. We have:

\begin{theorem}\label{sa}
Let $\K$ be a global field of characteristic $p$ (possibly $p=0$) and $\G\subset\GL_d$ be a simply connected absolutely simple $\K$-subgroup. Let $\Gamma$ be a finitely generated Zariski dense subgroup of $\G$ contained in $\G(\K)$. Then with probability tending to one when $x\to\infty$, if $\P$ is a prime of $\K$ with norm $\le x$, then $\pi_\P(\Gamma)=\G(\K_\P)$.
\end{theorem}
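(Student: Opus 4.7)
The plan is to deduce Theorem \ref{sa} from the classical Strong Approximation Theorem---due to Matthews--Vaserstein--Weisfeiler and Weisfeiler \cite{weisfeiler} in characteristic zero, and to Pink \cite{pink} in positive characteristic---together with the asymptotics of primes supplied by Theorem \ref{cheby} and the residual-degree estimate of Lemma \ref{fgen1}.

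First I would enlarge $S$ once and for all so that $\Gamma \subset \G(\O_\K(S))$, so that $\G$ extends to a smooth affine group scheme over $\Spec(\O_\K(S))$ whose geometric fibres at primes $\P\notin S$ are connected, simply connected, absolutely simple (compare Section \ref{reductionprime}), and so that the reduction homomorphism $\pi_\P\colon \G(\O_\K(S))\to \G(\K_\P)$ is defined for every such $\P$. Only finitely many primes are discarded in this normalization, which is harmless for a density statement.

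Next one invokes the appropriate form of strong approximation. In characteristic zero, Matthews--Vaserstein--Weisfeiler (Theorem \ref{mvw}, extended verbatim from $\SL_m(\Z)$ to Zariski dense subgroups of $\G(\K)$ for an arbitrary number field $\K$) already yields $\pi_\P(\Gamma)=\G(\K_\P)$ for \emph{all but finitely many} $\P$, a conclusion that is strictly stronger than the desired density-one statement. In positive characteristic $p>0$ one quotes Pink's theorem instead, which must be stated with some care because a Zariski dense subgroup of $\G(\K)$ may carry a non-trivial Frobenius / purely inseparable isogeny twist; Pink proves that, after replacing $\G$ by an appropriate twisted form over a suitable subfield $\K_0\subset\K$, the diagonal image of $\Gamma$ in $\prod_{\P\notin S}\G(\K_\P)$ has open closure, and in particular $\pi_\P(\Gamma)=\G(\K_\P)$ for every prime $\P$ of $\K$ of residual degree one over $\K_0$, up to a further finite exceptional set.

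The proof then closes by a density count: applying Lemma \ref{fgen1} to the extension $\K_0\subset\K$, the number of primes $\P$ with $|\P|\le x$ and residual degree greater than one is $o(\sqrt x)$, while Theorem \ref{cheby} provides $\approx x/\log x$ primes in total, so the exceptional set has density zero, which is exactly the conclusion of Theorem \ref{sa}. The main difficulty is thus external to the combinatorics: it lies in importing Pink's theorem in a form sharp enough to isolate ``residual degree at least two'' as the sole obstruction to $\pi_\P(\Gamma)=\G(\K_\P)$; granted this, the density conclusion is immediate from Lemma \ref{fgen1} and Chebyshev.
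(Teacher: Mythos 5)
Your overall strategy---invoke Weisfeiler/Pink over a smaller ``trace field'' $\K_0\subset\K$, then use Lemma \ref{fgen1} together with Chebyshev to show the primes of $\K$ with residual degree $>1$ over $\K_0$ have density zero---is exactly what the paper does, and your positive characteristic paragraph is essentially the paper's proof.

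However, your characteristic-zero shortcut is wrong. You claim that Matthews--Vaserstein--Weisfeiler ``extended verbatim'' from $\SL_m(\Z)$ to an arbitrary number field $\K$ gives $\pi_\P(\Gamma)=\G(\K_\P)$ for \emph{all but finitely many} primes $\P$ of $\K$. That fails the moment the trace field $\K_0$ (generated by $\text{Tr}(\Ad\gamma)$, $\gamma\in\Gamma$) is a proper subfield of $\K$. For a concrete counterexample, take $\Gamma=\SL_2(\Z)$ viewed inside $\SL_2(\Q(i))$: at any inert prime $\P$ of $\Q(i)$ over a rational prime $p$, the image $\pi_\P(\Gamma)$ lands in $\SL_2(\F_p)\subsetneq\SL_2(\F_{p^2})=\SL_2(\K_\P)$, so surjectivity fails for a positive-density set of primes of $\K$, not a finite one. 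Weisfeiler's theorem in characteristic zero is stated over the adjoint trace field $\K_0$, and surjectivity onto $\G(\K_\P)$ requires the extra hypothesis $f_\P=[\O_\K/\P:\O_{\K_0}/\mathfrak{p}]=1$, just as in positive characteristic. The fix is immediate: apply the $\K_0$-plus-Lemma \ref{fgen1} argument you correctly carried out in characteristic $p>0$ uniformly in both characteristics, which is precisely what the paper does. With that correction, your proof coincides with the paper's.
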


\begin{proof}
By Weisfeiler's theorem \cite[Theorem~1.1]{weisfeiler} (or Pink's version \cite{pink} in case of characteristic $2$ and $3$) there exists a finitely generated subfield $\K_0$ of $\K$ (namely the subfield generated by the traces of $\textnormal{Ad}(\Gamma)$ in characteristic 0) and a $\K_0$-structure on $\G$ such that $\Gamma\subset\G(\K_0)$ and for all $\mathfrak{p}\in\textnormal{Spec}(\O_{\K_0})$ large enough we have $\pi_\mathfrak{p}(\Gamma)=\G((\K_0)_{\mathfrak{p}})$.
 Let $\P$ be a prime of $\O_\K$ of norm $\le x$, with residual degree $f_\P=[\O_\K/\P:\O_{\K_0}/\mathfrak{p}]$, where $\mathfrak{p}=\P\cap\O_{\K_0}$. We can suppose that $f_\P=1$, since this holds with probability tending to one by Lemma \ref{fgen1}. Hence
$$\pi_\P(\Gamma)\supseteq \pi_{\mathfrak{p}}(\Gamma)=\G(\O_{\K_0}/\mathfrak{p})=\G(\O_\K/\P).$$
\end{proof}

Combining Theorem \ref{sa}, Corollary \ref{fastgeneration}, and Corollary \ref{pigeon2}, we obtain

\begin{corollary}\label{ballsurjects}
For every $d$ and $\alpha>0$ there exists $D=D(d,\alpha)$ such that the
following holds. Let $\K$ be a global field and
$\mathbf{H}$ be a  simply connected absolutely simple $\K$-group. Let
$\Gamma$ be a finitely generated Zariski-dense subgroup of
$\mathbf{H}(\K)$ and $W \subset \GL_d(\K)$ a finite subset.
Let $(B_n)_n$  be a family of subsets of $\mathbf{H}(\K)$ such that
\begin{itemize}
\item $B_n \subset W^n$ for every $n \geq 1$,
\item $\Gamma \subset \langle B_n \rangle$ for all $n$ large enough,
\item $|B_n|\geq e^{\alpha n}$.
\end{itemize}
  Then, with probability
tending to one as $n\to\infty$, if $\P$ is a prime of $\K$ of norm in
$[e^{3\alpha n},e^{4\alpha n}]$, we have
$$\pi_\P(B_{n}^D)=\mathbf{H}(\K_\P).$$
\end{corollary}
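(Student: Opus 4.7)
The proof is a direct three-way combination of the preceding results. Fix $d$ and $\alpha>0$, set $\beta=1/4$, and let $D=D(1/4,d)$ be the constant provided by Corollary \ref{fastgeneration} applied to simple algebraic groups of dimension at most $d$. I claim this $D$ does the job.

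First, I would apply Corollary \ref{pigeon2} to the family $(B_n)_n$: with probability tending to $1$ as $n\to\infty$, a prime $\P$ of norm in $[e^{3\alpha n},e^{4\alpha n}]$ satisfies $|\pi_{\P}(B_n)|\ge|\P|^{1/4}$. Next, I would apply Theorem \ref{sa} to the Zariski-dense subgroup $\Gamma$ of $\mathbf{H}(\K)$: with probability tending to $1$, a prime $\P$ of norm at most $e^{4\alpha n}$ satisfies $\pi_{\P}(\Gamma)=\mathbf{H}(\K_\P)$. By Chebyshev's theorem the primes of norm in $[e^{3\alpha n},e^{4\alpha n}]$ comprise a positive fraction of those of norm $\le e^{4\alpha n}$, so the second density-one statement restricts to this smaller range; intersecting with the first, we obtain that with probability tending to $1$, a prime $\P$ of norm in $[e^{3\alpha n},e^{4\alpha n}]$ satisfies both conclusions. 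For any such $\P$ and for $n$ large enough that $\Gamma\subset\langle B_n\rangle$, the set $A:=\pi_{\P}(B_n)$ has cardinality $\ge|\P|^{1/4}$ and generates $\pi_{\P}(\Gamma)=\mathbf{H}(\K_\P)$. Corollary \ref{fastgeneration} with $\beta=1/4$ then yields $A^D=\mathbf{H}(\K_\P)$, i.e.\ $\pi_{\P}(B_n^D)=\mathbf{H}(\K_\P)$, as desired.

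The only real point to verify is that $D$ depends only on $d$ and $\alpha$, not on $\K$, $\Gamma$, or $W$. This is built into the uniformity of Corollary \ref{fastgeneration}, whose output constant depends only on $\beta$ and on the dimension bound (in particular it is uniform over the finite field $\K_\P$). A small nuisance is that the Product Theorem is most naturally stated for symmetric sets; if $\pi_{\P}(B_n)$ is not already symmetric, one preemptively replaces $B_n$ by $B_n\cup B_n^{-1}\cup\{1\}\subset(W\cup W^{-1}\cup\{1\})^n$, preserving all three hypotheses up to an inessential constant factor that can be absorbed by decreasing $\alpha$ slightly. I do not foresee any genuine obstacle here: the heavy lifting has already been carried out in the three quoted ingredients, and the remaining task is merely to check that they dovetail in the claimed way.
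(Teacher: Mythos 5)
Your proof is correct and follows essentially the same route as the paper: combine Corollary \ref{pigeon2} (injectivity/largeness of $\pi_\P(B_n)$ for most primes in the range), Theorem \ref{sa} (surjectivity of $\pi_\P$ on $\Gamma$ for most primes), and Corollary \ref{fastgeneration} (fast generation) to conclude. Two minor remarks: the worry about symmetry is unnecessary since Corollary \ref{fastgeneration} as stated in the paper already handles arbitrary generating sets, and symmetrizing $B_n$ to $B_n\cup B_n^{-1}\cup\{1\}$ would not in any case shrink the cardinality, so no loss in $\alpha$ is needed; otherwise your explicit handling of the intersection of two density-one events via Chebyshev is a correct elaboration of a step the paper leaves implicit.
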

\begin{proof}
By Corollary \ref{pigeon2}, with probability tending to one as $n$ tends to $+\infty$, a prime $\P$ with norm $|\P| \in [e^{3\alpha n},e^{4 \alpha n}]$ satisfies $|\pi_\P(B_n)|\ge |\P|^{\frac{1}{4}}$. By Corollary \ref{fastgeneration}, there exists $D>0$ depending only on $d$ such that, provided $\pi_\P(\Gamma)=\mathbf{H}(\K_\P)$ for all $i$, we have $\pi_\P(B_{n}^D)=\mathbf{H}(\K_\P)$. Finally, the condition $\pi_\P(\Gamma)=\mathbf{H}(\K_\P)$ holds with probability tending to one by Theorem \ref{sa}.
\end{proof}

\section{Covering balls by subvarieties}\label{cov}

The following theorem indicates that in a simple algebraic group, large balls cannot be covered by a small number of subvarieties of bounded complexity. Let $\G$ be a connected simple algebraic group defined over a global field $\K$ with $d=\dim \G$. We fix a linear embedding $\G \leq \GL_d$.  Suppose that $\Gamma$ is a Zariski dense subgroup of $\G(\K)$. Let $W \subset \GL_d(\K)$ be a finite subset. Now let $(B_n)$ be a family of finite sets of $\G(\K)$ such that

\begin{itemize}
\item $B_n \subset W^n$ for every $n \geq 1$,
\item $\Gamma \subset \langle B_n \rangle$ for all $n$ large enough,
\item $|B_n| \geq e^{\alpha n}$ for some fixed $\alpha>0$.
\end{itemize}

\begin{theorem} \label{cover}Given $B_n$ and $\alpha>0$ as above and $M>0$ there exist $D=D(d,\alpha)\geq 1$ (independent of $M$) and $n_0=n_0(d,\alpha,M) \geq 1$, such that the following holds. Let $\Theta_n$ be the smallest $k\geq 1$ such that there are proper subvarieties $\mathcal{V}_1,...,\mathcal{V}_k$ of $\G$ with complexity bounded by $M$ such that $$B_n^D\subset \bigcup_{1 \le i \le k} \mathcal{V}_i.$$ Then $\Theta_n \geq  e^{\alpha n}$ for every  $n \geq n_0$.
\end{theorem}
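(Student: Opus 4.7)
The plan is to reduce modulo a large prime $\P$ and combine the surjectivity provided by Corollary \ref{ballsurjects} with a Lang--Weil point count in $\G_{\K_\P}$. Take $D := D(d,\alpha)$ from Corollary \ref{ballsurjects} (applied, if necessary, to the simply connected cover of $\G$; the finite isogeny kernel only affects absolute constants below). Suppose for contradiction that for some $n \geq n_0$ we have $\Theta_n < e^{\alpha n}$, so that there exist proper subvarieties $\mathcal{V}_1, \ldots, \mathcal{V}_k$ of $\G$ of complexity $\leq M$ with $k < e^{\alpha n}$ and $B_n^D \subset \bigcup_i \mathcal{V}_i$.

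By Corollary \ref{ballsurjects}, a positive density of primes $\P$ of $\K$ with $|\P| \in [e^{3\alpha n}, e^{4\alpha n}]$ satisfy $\pi_\P(B_n^D) = \G(\K_\P)$; by Chebyshev (Theorem \ref{cheb}) there are $\gtrsim e^{4\alpha n}/n$ such primes. Furthermore, after normalizing the defining polynomials of each $\mathcal{V}_i$ to have coprime coefficients in $\O_\K$, one verifies that for all but $O_{M,d}(1)$ primes $\P$ of norm $\geq e^{3\alpha n}$ the reduction $\mathcal{V}_{i,\P}$ is a proper subvariety of $\G_{\K_\P}$ of complexity $\leq M' = M'(M,d)$. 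Since $k < e^{\alpha n}$, the union of such ``bad'' primes is $o(e^{4\alpha n}/n)$, so one may select a prime $\P$ in the target range that is good for Corollary \ref{ballsurjects} and for every $\mathcal{V}_i$ simultaneously. For this $\P$, the Lang--Weil estimate applied to a proper subvariety of $\G_{\K_\P}$ of complexity $\leq M'$ gives $|\mathcal{V}_{i,\P}(\K_\P)| \leq C_{M,d} |\K_\P|^{d-1}$, while Lang--Weil for a connected simple algebraic group of dimension $d$ gives $|\G(\K_\P)| \geq c_d |\K_\P|^d$.

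The covering inclusion $\G(\K_\P) = \pi_\P(B_n^D) \subset \bigcup_i \mathcal{V}_{i,\P}$ then yields
\[
c_d |\K_\P|^d \leq k \cdot C_{M,d} |\K_\P|^{d-1},
\]
hence $k \geq (c_d / C_{M,d}) |\K_\P| \geq (c_d / C_{M,d}) e^{3\alpha n}$. Choosing $n_0 = n_0(d,\alpha,M)$ so that this lower bound exceeds $e^{\alpha n}$ contradicts $k < e^{\alpha n}$, completing the proof.

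The main obstacle is precisely the uniformity claim in the second paragraph: the set of ``bad'' primes for a single $\mathcal{V}_i$ a priori depends on the (possibly very large) coefficients of its defining polynomials, yet we need this set to be controlled uniformly across up to $e^{\alpha n}$ subvarieties. This is handled by the standard observation that subvarieties of $\G \subset \GL_d$ of complexity $\leq M$ form a bounded family (parametrized by a fixed projective scheme), together with the elementary fact that primes of norm $\geq e^{3\alpha n}$ dividing any nonzero element of $\O_\K$ of logarithmic height $\leq Cn$ are bounded in number solely in terms of $C/\alpha$. The same bookkeeping shows the complexity $M'$ depends only on $M$ and $d$, as required so that $D$ remains independent of $M$.
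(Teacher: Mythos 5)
Your overall architecture is the same as the paper's: reduce the (simply connected) situation mod a large prime $\P$ via Corollary~\ref{ballsurjects}, apply Lang--Weil upper bounds to the reductions $\mathcal{V}_{i,\P}$ and a Lang--Weil lower bound to $\G(\K_\P)$, and compare. However, the step you flag yourself as the ``main obstacle'' -- showing that the reductions $\mathcal{V}_{i,\P}$ are still proper subvarieties for enough primes -- is precisely where the argument as written has a gap, and the fix you sketch does not work.

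You claim that after normalizing each $\mathcal{V}_i$ to have coprime coefficients in $\O_\K$, the reduction is a proper subvariety for all but $O_{M,d}(1)$ primes of norm $\geq e^{3\alpha n}$. Two problems. First, ``coprime coefficients'' as a polynomial on the ambient $\GL_d$ only guarantees that $f_i \bmod \P$ is a nonzero \emph{polynomial}; it does not prevent $f_i$ from reducing to the zero \emph{function} on $\G$ mod $\P$, i.e.\ lying in the ideal of $\G_{\K_\P}$. What is actually needed is that the image of $f_i$ in $\K_\P[\G]$ is nonzero, which requires working inside a good integral model $A[\G]$ of the coordinate ring, not in the ambient polynomial ring. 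Second, your appeal to a ``bounded family'' plus ``logarithmic height $\leq Cn$'' does not apply: the $\mathcal{V}_i$ are arbitrary proper subvarieties of bounded complexity, and nothing bounds the heights of their defining polynomials in terms of $n$. The content of $f_i$ (even suitably normalized) can be large, so there is no a priori uniform $O_{M,d}(1)$ bound on bad primes; without such a bound the union over the up-to-$e^{\alpha n}$ varieties need not be $o(e^{4\alpha n}/n)$.

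The paper sidesteps this entirely by a cleaner observation: enlarge $S$ so that $A = \O_\K(S)$ is a \emph{principal ideal} Dedekind ring (Lemma~\ref{principal}) and so that $A[\G]\otimes_A A/\P$ is reduced for \emph{all} $\P$; then reduce each $\mathcal{V}_i$ to a hypersurface $\{f_i=0\}$ with $f_i \in A[\G]$, and normalize $f_i$ to be divisible by no non-unit of $A$. Because $A$ is a PID, $\P A[\G] = p A[\G]$, so if $f_i$ vanished on $\G$ mod $\P$ then reducedness would force $f_i \in p A[\G]$, contradicting the normalization. Thus \emph{every} prime is good -- no bad primes at all -- and no quantitative pigeonholing over primes is needed at this step. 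You should replace your second paragraph with this argument; the rest of your proof (Chebyshev count, Corollary~\ref{ballsurjects}, the two Lang--Weil bounds and the final inequality) is essentially identical to the paper's.
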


We will use the following estimate on the number of points on a variety over a finite field.

\begin{proposition}\label{langweilremix}
Let $d,m$ be positive integers. There exists a constant $c=c(d,m)$ such that for every finite field $\mathbb{F}_q$ and every closed $r$-dimensional subvariety $X$ of the $d$-dimensional affine space over $\mathbb{F}_q$ of complexity $\le m$ we have
$$\# X(\mathbb{F}_q)\le cq^r.$$
\end{proposition}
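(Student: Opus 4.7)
My plan is to decompose $X$ into its geometrically irreducible components over $\overline{\F_q}$ and bound the $\F_q$-points on each, using induction on the dimension $r$. The base case $r=0$ is immediate: a zero-dimensional closed subscheme of $\A^d$ cut out by polynomials of degree $\leq m$ has a number of points bounded by a constant depending only on $d,m$, by a direct Bezout estimate.

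For the inductive step, I would first invoke the standard Bezout-type bound for varieties of bounded complexity (as developed, for instance, in Appendix A of \cite{breuillard-green-tao}) to conclude that the number of geometrically irreducible components of $X$, and each of their degrees, is bounded by a constant $N=N(d,m)$. Write $X = Y_1 \cup \cdots \cup Y_s$ with $s \leq N$, and set $r_i := \dim Y_i \leq r$. Since every $\F_q$-point of $X$ lies in some $Y_i$, it suffices to show $|Y_i(\F_q)| \leq C\, q^{r_i}$ for each $i$, with $C=C(d,m)$.

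For components $Y_i$ that are defined over $\F_q$ as subschemes, this follows from the Lang-Weil estimate $|Y_i(\F_q)| = q^{r_i} + O(q^{r_i - 1/2})$, with implicit constant depending only on $\deg Y_i$ and $d$. For a component $Y_i$ not defined over $\F_q$, every $\F_q$-point of $Y_i$ is fixed by the Frobenius $\sigma$ and hence lies in $Y_i \cap \sigma(Y_i)$, which is a proper closed subvariety of $Y_i$ of dimension strictly less than $r_i$, still of complexity bounded in terms of $d,m$ (again by the bounded-complexity calculus of \cite{breuillard-green-tao}). The inductive hypothesis applied to this lower-dimensional variety gives $|Y_i(\F_q)| \leq C' q^{r_i - 1}$, which is $O(q^{r_i})$ as required.

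Summing the contributions of the $\leq N$ components yields the desired bound $|X(\F_q)| \leq c(d,m)\, q^r$. I expect the main technical point to be the Bezout-style bound controlling the number and degrees of the irreducible components in terms of the complexity of the defining equations; once it is in hand, the Lang-Weil input together with the elementary Frobenius-orbit argument is routine.
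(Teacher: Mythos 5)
Your proposal is, in substance, the same argument as the paper's: induct on the dimension, control the number and complexity of irreducible components by the Bezout-type bounds of \cite[Appendix A]{breuillard-green-tao}, apply Lang--Weil to the absolutely irreducible components, and handle the geometrically reducible pieces by intersecting Galois conjugates to drop the dimension. The one place where your write-up has a real (if easily repaired) gap is the inductive step for a component $Y_i$ not defined over $\F_q$: you propose to apply the inductive hypothesis to $Y_i\cap\sigma(Y_i)$, but this variety is defined only over the (nontrivial) field of definition $\F_{q^e}$ of $Y_i$, not over $\F_q$, so the statement being proved does not apply to it directly --- and applying it over $\F_{q^e}$ only bounds the $\F_{q^e}$-points by $c'(q^e)^{r_i-1}$, which is too weak. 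The correct move, which is what the paper does, is to intersect the \emph{entire} Frobenius orbit $Z:=\bigcap_j \sigma^j(Y_i)$: this $Z$ is Galois-stable hence defined over $\F_q$, still has complexity bounded in terms of $d,m$ (the orbit has length $\le N(d,m)$), has dimension $< r_i$, and contains all of $Y_i(\F_q)$, so the inductive hypothesis applies to it legitimately. The paper organizes this slightly differently (first passing to an $\F_q$-irreducible component and then taking $\bigcap X_i$ over its geometric components, using transitivity of the Galois action), but this is a cosmetic rearrangement of the same idea; once you replace $Y_i\cap\sigma(Y_i)$ by the full Galois-orbit intersection, your argument is sound.
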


This is probably well known to experts (modulo the definition of complexity), but in a lack of reference we provide a proof based on the Lang-Weil estimates, although they are probably also not needed for this upper bound.

\begin{proof}
A much more precise asymptotic behavior with upper and lower bounds
is given by the Lang-Weil theorem \cite{lang-weil}, but it requires the assumption that the
variety is absolutely irreducible. As we will see below, there is no
asymptotic lower bound by $q^r$ in case the variety is irreducible
but not absolutely irreducible.

Let us check however that the theorem follows from the original statement in \cite{lang-weil}. We argue by induction on the integer $r\in [0,d]$. Let us suppose that the theorem is proved for all $r'<r$ and let $X$ have dimension $r$. First, because of the bound on the complexity, we have a bound on the number of irreducible components \cite[Lemma A.4]{breuillard-green-tao}, and therefore it is enough to prove the theorem when $X$ is irreducible over $\mathbb{F}_q$ and $r$-dimensional.
\begin{itemize}
\item Suppose that $X$ is absolutely irreducible. Then the Lang-Weil Theorem (as stated in \cite{lang-weil}) directly provides the desired upper bound.
\item Suppose that $X$ is not absolutely irreducible. Let $X_1,\dots,X_k$ be the irreducible components of $X$. By \cite[Lemma A.4]{breuillard-green-tao}
     the integer $k$ can be bounded in terms of $d,m$. The components $X_i$ are defined over some finite extension of $\mathbb{F}_q$.
     This is a Galois extension, and $X$ is irreducible over $\mathbb{F}_q$, so the action of the Galois group on these components is
     transitive. Moreover, $X(\mathbb{F}_q)$ is contained in $Y=\bigcap X_i$.
     By assumption, $k\ge 2$, so $Y=\bigcap X_i$ has dimension $<r$ and is defined over $\mathbb{F}_q$ and has complexity bounded by some constant depending only on $m$ and $k$, hence of $d$ and $m$. So by induction we get $\# Y(\mathbb{F}_q)\le c'q^{r-1}$ for some constant $c'=c(m,d)$ and $$\# X(\mathbb{F}_q)\le\# Y(\mathbb{F}_q)\le c'q^{r-1}\le c'q^r.$$
\end{itemize}
Note that the induction has only $d$ steps, hence the constant $c$ eventually remains controlled by $(d,m)$.\end{proof}

\begin{proof}[Proof of Theorem \ref{cover}]

To apply Corollary \ref{ballsurjects}, we need to assume that $\G$ is simply connected. So first assume that the theorem is proved when $\G$ is simply connected and let us prove it in general. Let $\kappa:\tilde{\G}\to\G$ be the simply connected covering of $\G$; it is defined over $\K$; its kernel has cardinality bounded by some number only depending on $d$, and it has bounded degree.
Now $\kappa^{-1}(B_n)$ is also a family of generating subsets of $\kappa^{-1}(\Gamma)$ satisfying the required assumptions, and a covering of $B_n$ by $k$ proper subvarieties pulls pack to a covering of $\kappa^{-1}(B_n)$ by $k$ proper subvarieties. We can therefore assume that $\G$ is simply connected.

Let $S$ be some non-empty finite set of valuations on $\K$ including the Archimedean ones and such that $W \subset \GL_d(\O_\K(S))$. By Lemma \ref{principal} below, if we choose $S$ large enough, then $A=\O_\K(S)$ is a principal ideal ring.

Enlarging $S$ again if necessary, we can ensure that $A[\G]\otimes_{A}A/\P$ is a reduced ring for all primes $\P$ (a priori this holds for all but finitely many $\P$'s, see \S \ref{reductionprime}). We may also fix an $A$-structure on $\G$, i.e.\ we fix an isomorphism
$\K[\G]=A[\G]\otimes_{A}\K$, where $A[\G]\subset\K[\G]$ is a Hopf $A$-subalgebra.

Now suppose that $B_n^D\subset \bigcup_{i=1}^{k_n} X_i$ with $X_i$ of
complexity $\le M$. We can suppose without loss of generality that $X_i$
is given as a proper hypersurface $\{f_i=0\}$ in $\K[\G]$.

Now multiplying by a suitable nonzero element of $\K$ we can
even assume that $f_i\in A[\G]$. Moreover, if $f_i\in aA[\G]$
for some $a\in A-\{0\}$ then we can replace $f_i$ by $a^{-1}f_i$
without changing its set of zeros, so by noetherianity of the
domain $A[\G]$ we
can suppose that $f_i\notin aA[\G]$ for any $a\in A$ not
invertible in $A[\G]$ (or equivalently in $A$: because of the co-unity $A[\G]\to A$, if $a\in A$ is not invertible then it remains non-invertible in $A[\G]$).

We have the following claim: \emph{for every prime ideal $\P$ of $A$, $f_i$
defines
a proper
hypersurface $X_i^\P$ of $\K_\P[\G]$.}

Let us first finish the proof of Theorem \ref{cover}, granting the claim for a moment. For all $\P$'s we have $$\pi_\P(B_n^D)\subset
\bigcup_{i=1}^{k_n}X_i^\P(\K_\P).$$

By Corollary \ref{ballsurjects}, with probability tending to one as $n$ tends to $+\infty$, if
$\P$ is has norm in $[e^{3\alpha n},e^{4 \alpha n}]$, then $\pi_\P(B_n^D)=\G(\K_\P)$.
For such a prime, we get $$|\G(\K_\P)|\le k_n\sup_i
|X_i^\P(\K_\P)|.$$ If $d$ is the dimension of $\G$, and we use the shorthand $q:=|\P|$, then the
Lang-Weil upper bound in Theorem \ref{langweilremix} gives
$|X_i^\P(\K_\P)|\le cq^{d-1}$; while the Lang-Weil theorem in its
original form (using that $\G$ is absolutely irreducible) yields
$|\G(\K_\P)|\ge c'q^d$; here $c,c'$ are positive constants depending
only on $d$ and $M$. Thus $c'q^d\le k_n c q^{d-1}$, hence $k_n\ge
\frac{c'}{c}q\ge \frac{c'}{c}e^{3 \alpha n}$ and this ends the proof of the theorem modulo the claim.

Let us verify the claim. If ${f_i=0}$ is all of $\G$ modulo $\P$, this
means that $f_i$ is nilpotent in $A[\G]\otimes_{A}A/\P$. Since the latter
is a reduced ring, this means that $f_i$ is zero in
$A[\G]\otimes_{A}A/\P=A[\G]/\P A[\G]$, i.e.\ that $f_i\in \P A[\G]$. But
$A$ is a principal ideal ring, so we can write $\P=pA$, so $f_i\in pA[\G]$. By our choice of $f_i$, this implies that $p$ is invertible in $A$, a contradiction.
\end{proof}

We made use of the following classical lemma. Since we did not find a reference, we include a proof.

\begin{lemma}\label{principal}
There exists a finitely generated principal ideal subring $A$ of $\K$ containing
$\mathcal{O}_\mathbb{K}(S)$.
\end{lemma}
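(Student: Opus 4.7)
The plan is to enlarge $S$ to a larger finite set $S'$ of places of $\K$ so that the Dedekind domain $\mathcal{O}_\K(S')$ has trivial ideal class group; as a Dedekind domain with trivial class group is a PID, $A := \mathcal{O}_\K(S')$ will be the desired ring. It will contain $\mathcal{O}_\K(S)$ because $S\subseteq S'$, and it is finitely generated over $\mathbb{Z}$ or $\mathbb{F}_p$ since it is obtained from $\mathcal{O}_\K(S)$ by inverting finitely many more elements.

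First I would verify that the class group $\mathrm{Cl}(\mathcal{O}_\K(S))$ is finite. In the number field case this is immediate, since $\mathrm{Cl}(\mathcal{O}_\K(S))$ is a quotient of the finite class group $\mathrm{Cl}(\mathcal{O}_\K)$. In the function field case, write $\K=\mathbb{F}_q(X)$ where $X$ is a smooth projective curve over its field of constants, and identify $\mathcal{O}_\K(S)$ with the coordinate ring of the affine curve $X\smallsetminus S$. The Picard group of $X$ sits in a short exact sequence
\[
0\longrightarrow \mathrm{Pic}^0(X)\longrightarrow \mathrm{Pic}(X)\xrightarrow{\ \deg\ }\mathbb{Z}\longrightarrow 0,
\]
with $\mathrm{Pic}^0(X)$ finite (it is the group of $\mathbb{F}_q$-points of the Jacobian of $X$). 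The class group $\mathrm{Cl}(\mathcal{O}_\K(S))$ is the quotient of $\mathrm{Pic}(X)$ by the subgroup generated by the divisor classes of the places in $S$, and since $S$ is nonempty and every place has positive degree, the image of $\mathbb{Z}^{S}\to\mathbb{Z}$ under the composition is a nonzero subgroup of $\mathbb{Z}$. Hence $\mathrm{Cl}(\mathcal{O}_\K(S))$ is finite in both cases.

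Now choose prime ideals $\mathfrak{p}_1,\dots,\mathfrak{p}_r$ of $\mathcal{O}_\K(S)$ whose classes generate $\mathrm{Cl}(\mathcal{O}_\K(S))$, and let $S'=S\cup\{v_1,\dots,v_r\}$ where $v_i$ is the place associated with $\mathfrak{p}_i$. Then $\mathcal{O}_\K(S')$ is the localization of $\mathcal{O}_\K(S)$ at the multiplicative set generated by any uniformizers of the $\mathfrak{p}_i$. By the standard behavior of class groups under localization in a Dedekind domain, $\mathrm{Cl}(\mathcal{O}_\K(S'))$ is the quotient of $\mathrm{Cl}(\mathcal{O}_\K(S))$ by the subgroup generated by the classes of the $\mathfrak{p}_i$, which is all of it. Therefore $\mathcal{O}_\K(S')$ has trivial class group, and being a Dedekind domain it is a PID, finishing the proof.

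There is no substantive obstacle; the only point requiring attention is the function field case, where one has to invoke the finiteness of $\mathrm{Pic}^0(X)$ for a smooth projective curve over a finite field in order to reduce the class group of $\mathcal{O}_\K(S)$ to a finite group that can be killed by inverting finitely many elements.
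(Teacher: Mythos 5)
Your proof is correct and follows essentially the same strategy as the paper: enlarge $S$ (equivalently, invert finitely many elements) so that the class group of the resulting ring of $S'$-integers becomes trivial. The paper is slightly more economical in its citation — it only needs $\mathrm{Cl}(\mathcal{O}_\K(S))$ to be \emph{finitely generated} (citing Rosen for ``finite-by-cyclic'' in positive characteristic) rather than finite, then picks nonzero elements $s_j\in I_j$ in representative ideals and inverts their product $s$ to get a PID $B[1/s]$; whereas you prove outright that the class group is finite in both cases via the degree exact sequence for $\mathrm{Pic}(X)$ and finiteness of $\mathrm{Pic}^0(X)$, which is a small refinement but not needed. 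One minor imprecision in your write-up: $\mathcal{O}_\K(S')$ need not equal the localization of $\mathcal{O}_\K(S)$ at \emph{uniformizers} of the $\mathfrak{p}_i$ (a uniformizer may have nontrivial valuation at places outside $S'$); the correct statement is that $\mathcal{O}_\K(S')$ is the localization at the multiplicative set of elements that are units at every place outside $S'$, and the classes killed in $\mathrm{Cl}(\mathcal{O}_\K(S'))$ are precisely those of the primes in $S'\setminus S$. This does not affect the conclusion.
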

\begin{proof}

Recall that if $B$ is a domain with field of fractions $K$, a
fractional ideal of $B$ is by definition a nonzero finitely
generated $B$-submodule of $K$. Under multiplication, they form a
commutative semigroup with unity; if this is actually a group, $B$
is called a Dedekind domain and the quotient of this group by its
subgroup consisting of nonzero principal ideals is called the class
group of $B$ and is denoted by $\textnormal{Cl}(B)$.

Observe that if $B$ is a Dedekind domain and $D$ any multiplicative
subset of $B-\{0\}$, $D^{-1}B$ is a Dedekind domain and the natural
homomorphism $\textnormal{Cl}(B)\to\textnormal{Cl}(D^{-1}B)$ is
surjective. Moreover, if $I$ is a (finitely generated) ideal of $B$
and $D\cap I\neq \varnothing$ then the image $D^{-1}I$ of $I$ in
$\textnormal{Cl}(D^{-1}B)$ is trivial.

Now assume that $B=\mathcal{O}_\mathbb{K}(S)$, so $K=\mathbb{K}$.
Then $B$ is a Dedekind domain and $\textnormal{Cl}(B)$ is finitely
generated (it is finite in characteristic zero
\cite[Theorem~I.6.3]{neukirch} and finite-by-cyclic in positive
characteristic \cite[Lemma~5.6]{rosen}). Pick ideals $I_1,\dots,I_k$
of $B$ which are representatives of generators of
$\textnormal{Cl}(B)$, and let $s_j \in I_j \setminus \{0\}$ for each $j=1,...,k$ and $s=s_1\cdot ... \cdot s_k$. Then it follows from the remarks above that the image of each
$I_j$ in $\textnormal{Cl}(B[1/s])$ is trivial and since
$\textnormal{Cl}(B)\to\textnormal{Cl}(B[1/s])$ is surjective, we
deduce that $\textnormal{Cl}(B[1/s])$ is the trivial group,
i.e.\ $A=B[1/s]$ is a principal ideal domain.
\end{proof}

\section{Proof of uniform exponential conjugacy growth} \label{reduc}

In this section, we prove Theorem \ref{mainthm}, relying on Theorem \ref{cover}.
First, we show that without loss of generality, we may assume that the field of definition $F$ is a
global field (specialization step).  Then we reduce to the reductive case and finally prove the theorem by intersecting the ball with the semisimple part using Theorem \ref{cover} to count conjugacy classes inside the semisimple part.

\bigskip

\noindent \emph{Specialization step.}~

In proving Theorem \ref{mainthm}, the first step is to reduce the proof to the case when the field $F$ is a global field $\K$. Since $\Sigma$ is a finite set, the ring generated by the matrix entries of the elements of $\Sigma$ is a finitely generated commutative ring $R$. Such rings have lots of homomorphisms to global fields $\K$. The proposition below says that we can choose such a ring homomorphism with the property that the image of $\langle \Sigma \rangle$ under the induced homomorphism on $\langle \Sigma \rangle$ into $\GL_d(\K)$ remains non-virtually solvable. This process is traditionally called \emph{specialization}, because the ring homomorphism from $R$ to $\K$ is defined by specializing the values of a transcendence basis for $R$ to algebraic values.

\begin{proposition}[Specialization]\label{spec} Let $F$ be any field and $R$ be a finitely generated subring of $F$. Let $\Sigma$ be a finite symmetric subset of $\GL_d(R)$, which generates a non-virtually solvable subgroup $\langle \Sigma \rangle$. Then there exists a global field $\K$, with $\textnormal{char}(\K)=\textnormal{char}(F)$ and a ring homomorphism $\varphi: R \rightarrow \K$ inducing a group homomorphism $\overline{\varphi}:\langle \Sigma \rangle \rightarrow \GL_d(\K)$ such that $\overline{\varphi}(\langle \Sigma \rangle)$ is non-virtually solvable.
\end{proposition}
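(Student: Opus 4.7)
The strategy is to apply the Tits alternative to extract a pair $a=w_1(\Sigma), b=w_2(\Sigma)$ in $\Gamma$ generating a free non-abelian subgroup, and then to specialize $R$ to a global field in such a way that the image of this pair still generates a non-virtually-solvable subgroup; since any overgroup of a non-virtually-solvable group is non-virtually-solvable, this will force $\bar{\varphi}(\Gamma)$ to be non-virtually-solvable as well. Concretely, let $U\subset \GL_d\times \GL_d$ be the set of pairs $(g,h)$ such that $\langle g,h\rangle$ is not virtually solvable; then $(a,b)\in U$, and the goal is to find a global-field-valued point of $\sigma^{-1}(U)$, where $\sigma:\Spec(R)\to \GL_d\times \GL_d$ is the morphism determined by $a$ and $b$.

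The first key point is that $U$ is Zariski-open in $\GL_d\times \GL_d$. Indeed, Jordan's theorem bounds by some $J(d)$ the index of any solvable normal subgroup witnessing virtual solvability of a subgroup of $\GL_d$, while the Mal'cev--Zassenhaus theorem bounds by some $\kappa(d)$ the derived length of any solvable subgroup of $\GL_d$. The free group $F_2=\langle x,y\rangle$ has only finitely many subgroups $N_1,\dots,N_s$ of index $\le J(d)$, each finitely generated by an explicit Schreier basis of words in $x,y$. Hence $(g,h)$ generates a virtually solvable subgroup if and only if for some $i$ every iterated commutator of depth $\kappa(d)+1$ of the Schreier generators of $N_i$ evaluated at $(g,h)$ is trivial, a finite union of Zariski-closed conditions; the complement $U$ is therefore open. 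Since $(a,b)$ generates a free non-abelian subgroup, $(a,b)\in U$ over the generic point of $\Spec(R)$, so $\sigma^{-1}(U)$ is a nonempty open subscheme of the integral scheme $\Spec(R)$.

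It remains to produce a $\K$-valued point of $\sigma^{-1}(U)$ with $\K$ a global field of characteristic equal to that of $F$. In characteristic zero, $R\otimes_{\Z}\Q$ is a nonzero finitely generated $\Q$-algebra, and any maximal ideal of it contained in $\sigma^{-1}(U)\otimes\Q$ has residue field a number field $\K$ by Zariski's lemma, providing the desired $\varphi:R\to\K$. In positive characteristic $p$, we first observe that $R$ cannot be contained in $\overline{\F}_p$: if it were, $\Gamma$ would be a finitely generated subgroup of $\GL_d(\overline{\F}_p)$, hence finite, contradicting the non-virtual-solvability hypothesis. Thus $\Spec(R)$ and its nonempty open subscheme $\sigma^{-1}(U)$ have dimension $\ge 1$ as $\F_p$-schemes; cutting by sufficiently generic hyperplanes in an affine embedding yields a one-dimensional integral closed subscheme $C\subset \sigma^{-1}(U)$ whose function field $\K:=k(C)$ is a global function field, and the associated dominant map $R\to\K$ is the desired $\varphi$. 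I expect the delicate step to be the Zariski-openness of $U$: it rests crucially on the uniform bounds of Jordan and Mal'cev--Zassenhaus, without which virtual solvability would not reduce to a finite scheme-theoretic condition stable under specialization.
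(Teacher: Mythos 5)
The paper's own ``proof'' of Proposition~\ref{spec} is just a list of references, so you are reconstructing the argument blind. Your strategy---pass to a free pair $(a,b)$ via the Tits alternative, cut out an open locus $U$ of pairs generating a non-virtually-solvable subgroup, and find a global-field-valued point of $\sigma^{-1}(U)$---has the right shape but a fatal flaw at the key step. The set $U$ cannot be Zariski-open in positive characteristic: over $\overline{\F}_p$ every finitely generated subgroup of $\GL_d(\overline{\F}_p)$ lies in some $\GL_d(\F_{p^k})$ and is hence finite and virtually solvable, so $U(\overline{\F}_p)=\varnothing$; yet any nonempty open subscheme of the irreducible variety $(\GL_d\times\GL_d)_{\overline{\F}_p}$ has $\overline{\F}_p$-points. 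Thus $\sigma^{-1}(U)$ is not a nonempty open subscheme of $\Spec R$ when $\textnormal{char}(F)=p>0$, and your construction of the curve $C\subset\sigma^{-1}(U)$ collapses exactly in the case you most need it. The openness argument also uses Jordan's bound $J(d)$ on the index of a normal solvable subgroup, which is genuinely a characteristic-zero fact: the finite simple groups $\operatorname{PSL}_2(\F_{p^k})\subset\GL_3(\overline{\F}_p)$ are (trivially) virtually solvable yet have no solvable normal subgroup of index bounded independently of $k$.

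There is a secondary issue even in characteristic zero: the condition ``every iterated commutator of depth $\kappa(d)+1$ of the Schreier generators of $N_i$ vanishes'' is not equivalent to ``the image of $N_i$ has derived length at most $\kappa(d)$.'' The $(k+1)$-st derived subgroup of a finitely generated group is not generated by iterated commutators of the generators once $k\ge 1$; one must instead pass to the Zariski closure and use bounded complexity. What the cited references actually do is isolate an open condition $U'$ which, \emph{together with the residue field being infinite}, forces non-virtual-solvability of the specialized image---for instance Burnside density (the image of $\Sigma$ spans $\mathrm{End}(V')$ in a suitable absolutely simple quotient $\G\to\mathbf{S}$ of the Zariski closure), or the persistence of a Tits ping-pong configuration via open eigenvalue/eigenvector conditions in a completion, as in Breuillard--Gelander Lemma~3.1. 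Your correct observation that $R\not\subset\overline{\F}_p$ in characteristic $p$ (so one may specialize to an infinite residue field) is precisely the extra input such an argument needs, but it must be paired with an actually open $U'$, not with $U$ itself.
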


\begin{proof} This is now classical. See for example \cite[Proposition 2.2]{lubotzky-mann}, \cite[Theorem 4]{larsen-lubotzky} and also \cite[\S 4]{EMO} or \cite[Lemma 3.1]{breuillard-gelander}.
\end{proof}

This proposition allows us to assume that the field $F$ is
a global field $\K$ in the proof of Theorem \ref{mainthm}, because
if $g \in \langle \Sigma \rangle $, then the characteristic
polynomial $\chi_{\overline{\varphi}(g)}$ coincides with
$\varphi(\chi_g)$, so there are at least as many distinct
characteristic polynomials arising from elements in $\Sigma^n$ as
there are from elements in $\overline{\varphi}(\Sigma)^n$.

\bigskip

\noindent \emph{Reduction to a reductive group.}\newline
~
\indent Let $\G$ be the Zariski-closure of $\langle \Sigma \rangle$ in
$\GL_d(\K)$. Recall, by definition, that a reductive algebraic group
is an algebraic group with no non-trivial unipotent normal subgroup.
Note that we do not require reductive groups to be connected here.
We have:

\begin{lemma}[Going to the reductive part]\label{reductivepart} Let $\G \subset \GL_d$ be an algebraic group defined over a field $\K$. There is a homomorphism of algebraic groups $\rho: \G \rightarrow \GL_d$ defined over $\K$, and with unipotent kernel, such that $\rho(\G)$ is a reductive algebraic subgroup of $\GL_d$ defined over $\K$ and such that $\chi(\rho(g))=\chi(g)$, for every $g \in \G(\K)$, where $\chi(g)$ is the characteristic polynomial of $g$ in $\GL_d$.
\end{lemma}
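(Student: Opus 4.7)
My approach is the classical \emph{semisimplification} construction. Since $\G\subset\GL_d=\GL(V)$ with $V=\K^d$, I would choose a maximal chain of $\G$-invariant $\K$-subspaces
\[
0=V_0\subsetneq V_1\subsetneq\cdots\subsetneq V_m=V,
\]
so that each quotient $V_i/V_{i-1}$ is an irreducible $\K$-rational representation of $\G$. Such a chain exists by repeatedly refining any $\G$-stable filtration (finite-dimensionality makes this terminate) and is defined over $\K$. I then set $V^{\ss}:=\bigoplus_{i=1}^m V_i/V_{i-1}$ and let $\rho\colon\G\to\GL(V^{\ss})\cong\GL_d$ be the morphism obtained by taking the direct sum of the induced actions on each successive quotient. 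This is a morphism of $\K$-algebraic groups since the filtration is $\K$-rational.

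\textbf{Matching characteristic polynomials.} Pick a $\K$-basis of $V$ adapted to the flag $(V_i)$. For any $g\in\G(\K)$, the matrix of $g$ in this basis is block upper triangular with diagonal blocks given by the action of $g$ on the successive quotients $V_i/V_{i-1}$. Since the characteristic polynomial of a block upper triangular matrix equals the product of the characteristic polynomials of its diagonal blocks, one has
\[
\chi(g)=\prod_{i=1}^m \chi\bigl(g\,|\,V_i/V_{i-1}\bigr)=\chi(\rho(g)),
\]
which is exactly the required invariance.

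\textbf{Kernel is unipotent and image is reductive.} If $g\in\ker\rho$ then $g$ acts as the identity on every $V_i/V_{i-1}$, so in the adapted basis $g$ is upper triangular with $1$'s on the diagonal; hence $\ker\rho$ consists of unipotent elements, and being a closed normal $\K$-subgroup of $\G$ it is a unipotent normal $\K$-subgroup. For the image, by construction $\rho(\G)$ acts on $V^{\ss}$ with each summand $V_i/V_{i-1}$ being $\K$-irreducible; after base change to $\overline{\K}$ each such summand decomposes as a direct sum of its Galois-conjugate absolutely irreducible constituents, so $V^{\ss}\otimes_\K\overline{\K}$ is a semisimple $\rho(\G)(\overline{\K})$-module. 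The standard criterion (a closed subgroup of $\GL(W)$ whose identity component acts semisimply on $W$ has trivial unipotent radical) then forces $\rho(\G)$ to be reductive in the sense of the lemma.

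\textbf{Expected obstacle.} The only delicate point is the reductivity claim, because ``$\K$-irreducible'' does not imply ``absolutely irreducible'' when $\K$ is not algebraically closed. The fix, sketched above, is to observe that a $\K$-irreducible module becomes a semisimple $\overline{\K}$-module after base change (its constituents form a single Galois orbit of absolutely irreducibles), so complete reducibility is preserved, and the triviality of the unipotent radical of $\rho(\G)$ follows from the criterion applied over $\overline{\K}$. Everything else (existence of the flag, block-triangular computation of $\chi$, unipotence of the kernel) is routine and works in arbitrary characteristic.
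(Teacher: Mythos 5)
Your construction is the standard semisimplification of the ambient representation, and in outline it matches what the paper does: choose a $\G$-invariant flag defined over $\K$, pass to the associated graded module, observe that the map to $\GL(\bigoplus V_i/V_{i-1})$ preserves characteristic polynomials because the original matrices are block upper triangular, and note the kernel is upper triangular unipotent. The paper's flag is slightly coarser (it takes $V_i/V_{i-1}$ to be the $\mathbf{U}$-fixed vectors in $V/V_{i-1}$, where $\mathbf{U}$ is the maximal normal unipotent $\K$-subgroup, rather than a full $\K$-composition series), but both are legitimate and the char-poly/kernel arguments are identical.

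The genuine problem is in your reductivity step. You claim that a $\K$-irreducible summand $V_i/V_{i-1}$ becomes semisimple after base change to $\overline{\K}$ because its absolutely irreducible constituents form a single Galois orbit. That is true when $\K$ is perfect, but the lemma must hold for an arbitrary field $\K$ (and in the paper it is applied over global function fields $\F_q(t)$, which are imperfect). Over an imperfect field the claim fails: for a purely inseparable extension $L/\K$ of degree $p$, the group $R_{L/\K}\mathbb{G}_m$ acts $\K$-irreducibly on $L\simeq\K^p$, yet $L\otimes_\K\overline{\K}\simeq\overline{\K}[\epsilon]/(\epsilon^p)$ is a non-semisimple module (its only simple submodule is $\epsilon^{p-1}\overline{\K}$). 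So $V^{\sss}\otimes_\K\overline{\K}$ need not be semisimple and your criterion over $\overline{\K}$ does not apply.

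The repair is actually a simplification: you do not need to base change at all. The paper's stated definition of reductive is \emph{no non-trivial normal unipotent subgroup}, and the criterion you invoke works verbatim over $\K$: if $U\triangleleft\rho(\G)$ is unipotent, then $(V^{\sss})^U\neq 0$ by Kolchin, this is a $\rho(\G)$-submodule since $U$ is normal, so by $\K$-semisimplicity of $V^{\sss}$ it has a complementary $\rho(\G)$-submodule on which $U$ would again have fixed vectors if nonzero; hence $U$ acts trivially on $V^{\sss}$, i.e.\ $U=1$. Alternatively, the paper sidesteps the issue entirely by building the flag from $\mathbf{U}$-fixed points, so that by construction $\rho$ factors through $\G/\mathbf{U}$, and since the kernel of $\rho$ is a normal unipotent $\K$-subgroup containing $\mathbf{U}$ it equals $\mathbf{U}$ by maximality, giving $\rho(\G)\simeq\G/\mathbf{U}$ which is reductive in this sense by the very definition of $\mathbf{U}$.
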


\begin{proof}
Let $V=\K^d$ and let $\mathbf{U}$ be the maximal normal unipotent
subgroup of $\G$. Being unipotent, $\mathbf{U}$ admits a non-trivial
subspace of fixed points $V_1$ in $V$, and in fact stabilizes a flag
$V_1 \subsetneq V_2 \subsetneq ... \subsetneq V_r=V$, such that
$V_i/V_{i-1}$ consists of the $\mathbf{U}$-fixed points in $V/V_{i-1}$.
Then $\G$ leaves each $V_i$ invariant and its action on
$V_i/V_{i-1}$ factors through $\G/\mathbf{U}$. Replace the original
representation by its semi-simplification, i.e.\ the representation
$\rho$ on $V=\oplus_i V_i/V_{i-1}$. It is easy to see that the new
representation consists of the diagonal blocks of the old one and gives rise to
the same characteristic polynomial as the old one, and that the kernel of $\rho$ is unipotent.
\end{proof}

Accordingly, to prove Theorem \ref{mainthm}, it is enough to do it under the additional assumptions that the field $F$ is a global field, and the Zariski closure of $\Sigma$ is reductive (possibly not connected): indeed applying Lemma \ref{reductivepart} to the Zariski closure of the subgroup generated by $\Sigma$, since the kernel of $\rho$ is nilpotent, the image of $\Sigma$ still generates a non-virtually-solvable subgroup.
What we actually show is the following. Recall that $\alpha_\Sigma$ was defined in $(\ref{defalpha})$.

\begin{proposition}\label{explimain}
For every $d$, there exists a constant $\eta(d)>0$ such that if $\K$ is a global field and $\Sigma$ a finite symmetric subset of $\GL_d(\K)$ generating a non-virtually-solvable subgroup with (not necessarily connected) reductive Zariski closure $\G$, then $$\liminf_{n \rightarrow \infty} \frac{1}{n} \log \chi_{\Sigma}(n)\geq \eta(d)\alpha_\Sigma.$$
\end{proposition}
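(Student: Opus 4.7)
The plan is to apply a semisimple analogue of Theorem~\ref{cover} to the semisimple part $\G_{\mathrm{ss}}:=[\G^\circ,\G^\circ]$, using the fibers of the characteristic polynomial map $\chi\colon \G_{\mathrm{ss}}\to\mathbb{A}^d$ as the covering subvarieties. This proceeds by three reductions: from reductive to connected reductive, from connected reductive to its semisimple part, and a version of Theorem~\ref{cover} valid for semisimple groups.

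For the first reduction, the subgroup $\Gamma^\circ:=\Gamma\cap\G^\circ(\K)$ has index bounded in terms of $d$ alone, so setting $B_n:=B_\Sigma(n)\cap\Gamma^\circ$ gives $B_n\subset(\Sigma\cup\Sigma^{-1})^n$, $|B_n|\geq e^{(\alpha_\Sigma-o(1))n}$, and $\langle B_n\rangle=\Gamma^\circ$ for large $n$; any characteristic polynomial attained on $B_n$ is already attained on $B_\Sigma(n)$. For the second reduction, the quotient $\G^\circ/\G_{\mathrm{ss}}$ is the abelianization of the connected reductive group $\G^\circ$, hence a torus, and the image of $\Gamma^\circ$ in it is a finitely generated abelian group of polynomial growth. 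Since two elements of $B_n$ in the same coset of $H:=\Gamma^\circ\cap\G_{\mathrm{ss}}(\K)$ differ by an element of $H\cap B_{2n}$, a coset-fiber count yields
\[
  |H\cap B_{2n}|\;\geq\;\frac{|B_n|}{\mathrm{poly}(n)}\;\geq\;e^{(\alpha_\Sigma-o(1))n}.
\]
Moreover $[\Gamma^\circ,\Gamma^\circ]\subset H$, and it is Zariski-dense in $\G_{\mathrm{ss}}=[\G^\circ,\G^\circ]$ because the image of $\Gamma^\circ\times\Gamma^\circ$ under the commutator morphism is Zariski-dense in the image of $\G^\circ\times\G^\circ$, which generates $\G_{\mathrm{ss}}$ as an algebraic group.

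For the third reduction, $\chi$ is $\K$-regular of bounded degree, and is non-constant on $\G_{\mathrm{ss}}$: otherwise every element of $\G_{\mathrm{ss}}$ would be unipotent, forcing the connected algebraic group $\G_{\mathrm{ss}}$ to be both unipotent and semisimple, hence trivial---contradicting non-virtual-solvability. Its fibers are therefore proper closed subvarieties of $\G_{\mathrm{ss}}$ of complexity bounded solely in terms of $d$. Setting $C_n:=H\cap B_{2n}$, which is contained in $W^n$ for $W:=(\Sigma\cup\Sigma^{-1})^2$, the semisimple analogue of Theorem~\ref{cover} applied to $[\Gamma^\circ,\Gamma^\circ]$ and the sets $C_n$ produces some $D=D(d)$ for which $C_n^D$ cannot be covered by fewer than $e^{(\alpha_\Sigma-o(1))n}$ fibers of $\chi$. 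Consequently $\chi$ attains at least $e^{(\alpha_\Sigma-o(1))n}$ distinct values on $C_n^D\subset B_\Sigma(2Dn)$, giving $\chi_\Sigma(2Dn)\geq e^{(\alpha_\Sigma-o(1))n}$ and proving the proposition with $\eta(d)=1/(2D)$.

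The main technical obstacle is the semisimple version of Theorem~\ref{cover}. Its proof should carry over essentially verbatim: strong approximation (Weisfeiler, Pink) applies to simply-connected semisimple groups, the product theorem for products of finite simple groups follows from the simple case factor-by-factor, and the Lang--Weil estimate only depends on dimension. If one insists on invoking Theorem~\ref{cover} strictly as stated (for a single simple factor), an alternative is to project $\G_{\mathrm{ss}}$ onto a $\K$-simple factor $\G_1$ of the adjoint group $\G_{\mathrm{ss}}^{\mathrm{ad}}=\G_1\times\cdots\times\G_k$, with $k$ bounded in $d$; a further coset-fiber argument produces some projection $\pi_i(C_n)$ of size $\geq e^{(\alpha_\Sigma-o(1))n/k}$, but this route introduces the genuinely nontrivial difficulty of relating fibers of the $\GL_d$-characteristic polynomial on $\G_{\mathrm{ss}}$ to level sets pulled back from the simple factor, which requires a careful analysis of the decomposition of $V=\K^d$ into $\G_{\mathrm{ss}}$-irreducibles.
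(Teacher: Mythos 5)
Your main route hinges on a ``semisimple analogue of Theorem~\ref{cover},'' which you say should carry over essentially verbatim. That is exactly the step the paper declines to take: the introduction explicitly notes that extending Theorem~\ref{cover} (and Theorem~\ref{fiber}) to semisimple groups, while possible, involves ``serious technicalities, in particular when applying strong approximation in positive characteristic due to the presence of Frobenius twists.'' Theorem~\ref{cover} as stated, together with Corollary~\ref{ballsurjects} and Theorem~\ref{sa} on which it relies, is formulated only for a simply connected \emph{absolutely simple} $\K$-group, and the reduction from $\G_{\mathrm{ss}}$ (a product of several simple factors, possibly with a nontrivial Galois action permuting them) is not a formality. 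So the main route of your argument contains a genuine gap.

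Your fallback route --- project $\G_{\mathrm{ss}}$ onto a single simple factor and apply Theorem~\ref{cover} there --- is in fact what the paper does. You correctly identify the ensuing difficulty: the characteristic polynomial $\chi$ is a function on $\mathbf{H}$, not on the chosen simple factor $\mathbf{S}_j$, so one must show that the fibers of $\chi$ project to \emph{proper} subvarieties of $\mathbf{S}_j$ of bounded complexity. You flag this as ``genuinely nontrivial'' and leave it unresolved. The paper resolves it: it works with $\mathcal{V}_f:=\overline{\pi_j(\{g\in\mathbf{H}:\chi_g=f\})}$ and shows these are proper hypersurfaces by restricting to a maximal torus $T=T_1T_2$ of $\mathbf{H}$ (with $T_1$ isogenous via $\pi_j$ to a maximal torus of $\mathbf{S}_j$ and $T_1\cap T_2$ finite) and using faithfulness of the weights of $T$ to rule out a dense fiber. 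The pigeonhole on simple factors to pick a $j$ with $\alpha_\Sigma(j)\geq\alpha_\Sigma/d$, and the Reidemeister--Schreier trick to produce $W$ with $\Sigma^{2n}\cap\G^0\subset W^n$, are the other ingredients the paper uses to set up the hypotheses of Theorem~\ref{cover} on $\mathbf{S}_j$; your sketch omits the former entirely.

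One further (minor) error: you assert that $[\Gamma:\Gamma\cap\G^\circ(\K)]$ is bounded in terms of $d$ alone. The paper's closing remark in \S\ref{reduc} points out explicitly that $\G/\G^0$ \emph{cannot} be uniformly bounded in terms of $d$, which is exactly why one cannot simply pass to a bounded power $\Sigma^N$ landing in $\mathbf{H}$ and must instead argue via coset counting and Reidemeister--Schreier. Fortunately this claim is not actually needed for your coset-fiber count (a finite, $\Sigma$-dependent index suffices for the $\liminf$), but it is false as stated.
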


According to the uniform exponential growth of linear groups \cite{breuillard1}, $\alpha_\Sigma$ can be bounded below by a positive constant $c(d)$, not depending on $\K$ nor on the subgroup generated by $\Sigma$. Therefore, in view of the reductions above, Theorem \ref{mainthm} follows from Proposition \ref{explimain}, which we now proceed to prove.

\begin{proof}[Proof of Proposition \ref{explimain}.]
Let $\G^0$ be the connected component
of the identity in $\G$, and let $\mathbf{H}:=[\G^0,\G^0]$ be the
commutator subgroup of $\G^0$. Then $\mathbf{H}$ is
a connected semisimple algebraic group. Let $\pi: \G \rightarrow \G/\mathbf{H}$ be the quotient homomorphism; we have $\ker \pi=\mathbf{H}$. Let also $\mathbf{S}_i$ be the absolutely simple factors of $\mathbf{H}$ and $\pi_i: \G^0 \rightarrow \mathbf{S}_i$ the canonical projections. Up to passing to a finite extension of $\K$ if necessary, we may assume that the $\mathbf{S}_i$ and the projection maps $\pi_i$ are defined over $\K$.

Since $\G/\mathbf{H}$ is virtually abelian, the growth of $|\pi(\Sigma^n)|$ is at most polynomial, say $\le Cn^\kappa$. By the pigeonhole principle, there must exist a coset of $\mathbf{H}$ whose intersection with $\Sigma^n$ has as least $|\Sigma^n|/Cn^\kappa$ elements. It follows that $|\Sigma^{2n} \cap \mathbf{H}| \ge |\Sigma^n|/Cn^\kappa$. Moreover, setting $\alpha_\Sigma(i):= \liminf_{n \rightarrow \infty} \frac{1}{n} \log |\pi_i(\Sigma^{2n} \cap \mathbf{H})|$, we have,
$$d \max_i \alpha_\Sigma(i) \geq \sum_i \alpha_\Sigma(i) \geq \liminf_{n \rightarrow \infty} \frac{1}{n} \log |\Sigma^{2n} \cap \mathbf{H}| \geq \alpha_\Sigma.$$

Let $j$ be an index  such that $\alpha_\Sigma(j)=\max_i \alpha_\Sigma(i)$.
Let $B_n=\pi_j(\Sigma^{2n} \cap \mathbf{H})$. We have: $$\liminf_{n \rightarrow \infty} \frac{1}{n} \log |B_n| \geq \frac{1}{d}\alpha_\Sigma$$We are going to apply Theorem \ref{cover} to the simple group $\mathbf{S}_j$, the $B_n$'s and the subvarieties of $\mathbf{S}_j$ defined by $\mathcal{V}_f:=\overline{\pi_j(\{g \in \mathbf{H}, \chi_g = f\})}$, where $f\in \K[X]$ is an arbitrary polynomial and $\chi_g$ denotes the characteristic polynomial of $g$. Let $\alpha< \frac{1}{d}\alpha_\Sigma$. We now check that the assumptions of that theorem do hold.

The $\mathcal{V}_f$ are subvarieties of $\mathbf{S}_j$ whose complexity is bounded in terms of $d$ only and in particular independently of $f$. Let us check that they are proper subvarieties too. Let $T$ be a maximal torus of $\mathbf{H}$ and $\lambda_i$'s be characters of $T$ in the ambient linear representation of $\mathbf{H}$, so that for every $t \in T$, $\chi_t(X)=\prod_i (\lambda_i(t) - X)$. Write $T=T_1T_2$, where $T_1 \cap T_2$ is finite and $T_1$ is isogenous via $\pi_j$ to a maximal torus of $\mathbf{S}_j$. If $\mathcal{V}_f$ were not proper, then for a dense set of $t_1 \in T_1$, there would exist a $t_2 \in T_2$ such that $\chi_{t_1t_2}(X)=f=\prod_i(\lambda_i - X)$. We would thus have  $\lambda_i(t_1t_2)=\lambda_i$ for all $i$. But recall that if $t \in T$, then $\lambda_i(t)=1$ for all $i$ implies $t=1$. Since $T_1 \cap T_2$ is finite, this implies that $T_1$ is finite, which is impossible. We conclude that the $\mathcal{V}_f$'s are proper subvarieties of $\mathbf{S}_j$.

The $B_n$'s form an increasing family of symmetric subsets of $\mathbf{S}_j$ with $|B_n| \geq e^{\alpha n}$ for all $n$ large enough. Moreover, observe that $\Lambda:=\langle \Sigma \rangle \cap \G^0$ is finitely generated since $\G^0$ has finite index in $\G$. It follows from the Reidemeister-Schreier rewriting process (see \cite[sec 2.3]{magnus}) that there exists a finite set of generators $W_0$ of $\Lambda$ such that for every $\gamma \in \Lambda$ one has $\ell_{W_0}(\gamma) \leq \ell_{\Sigma}(\gamma)$, where $\ell_{W_0}$ and $\ell_{\Sigma}$ denote the word length with respect to the generating sets $W_0$ and $\Sigma$. Taking $W:=W_0W_0$, we get a finite set $W \subset \langle \Sigma \rangle \cap \G^0$ such that $\Sigma^{2n} \cap \G^0 \subset W^{n}$, and hence $B_n \subset \pi_j(W)^n$. It now only remains to check that $\langle B_n \rangle$ eventually contains some fixed Zariski-dense subgroup $\Gamma$ of $\mathbf{S}_j$. We require the following lemma:

\begin{lemma} Let $\mathbf{H}$ be a connected semisimple algebraic group defined over a global field $\K$ and $\Delta$ be a Zariski-dense subgroup of $\mathbf{H}(\K)$. Then $\Delta$ contains a finitely generated Zariski-dense subgroup $\Gamma$.
\end{lemma}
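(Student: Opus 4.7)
\smallskip

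\noindent\emph{Plan of proof.} The strategy is to choose a finitely generated subgroup $\Gamma\le\Delta$ maximizing $\dim\overline{\Gamma}^{0}$ (the dimension of the identity component of its Zariski closure in $\mathbf{H}$), and to show that such a $\Gamma$ is automatically Zariski-dense. Such a $\Gamma$ exists because $\dim\overline{\Gamma}^{0}\le\dim\mathbf{H}$ for every finitely generated $\Gamma\le\Delta$. Since $\mathbf{H}$ is connected, it will suffice to show $\overline{\Gamma}^{0}=\mathbf{H}$, as this forces $\overline{\Gamma}=\mathbf{H}$.

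The first step is to prove that $\overline{\Gamma}^{0}$ is normal in $\mathbf{H}$. For any $\delta\in\Delta$, the group $\overline{\langle\Gamma,\delta\rangle}^{0}$ is connected and contains $\overline{\Gamma}^{0}$; by maximality of $\dim\overline{\Gamma}^{0}$ these two have the same dimension, and since identity components of algebraic groups are irreducible, they must coincide. Hence $\delta$ normalizes $\overline{\Gamma}^{0}$, so $\Delta\subseteq N_{\mathbf{H}}(\overline{\Gamma}^{0})$. The normalizer being Zariski-closed and $\Delta$ being Zariski-dense in $\mathbf{H}$, this forces $N_{\mathbf{H}}(\overline{\Gamma}^{0})=\mathbf{H}$, i.e.\ $\overline{\Gamma}^{0}$ is a connected normal subgroup of $\mathbf{H}$.

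Next I would rule out $\overline{\Gamma}^{0}\neq\mathbf{H}$ by contradiction. If $\overline{\Gamma}^{0}\subsetneq\mathbf{H}$, then $\mathbf{H}/\overline{\Gamma}^{0}$ is a non-trivial connected semisimple algebraic group (the quotient of a connected semisimple group by a connected normal subgroup is again semisimple), and the image $\pi(\Delta)$ under the quotient map $\pi\colon\mathbf{H}\to\mathbf{H}/\overline{\Gamma}^{0}$ is Zariski-dense in it. By Platonov's theorem the Zariski closure of a virtually solvable linear group is virtually solvable, so $\pi(\Delta)$ is not virtually solvable; hence $\pi(\Delta)$ contains a finitely generated subgroup that is not virtually solvable, which by the Tits alternative contains a non-abelian free subgroup and in particular an element $\pi(\delta)$ of infinite order. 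Then $\overline{\langle\delta\rangle}^{0}$ projects to a positive-dimensional subgroup of $\mathbf{H}/\overline{\Gamma}^{0}$, so it cannot be contained in $\overline{\Gamma}^{0}$. Consequently $\overline{\langle\Gamma,\delta\rangle}^{0}$ is a connected subgroup of $\mathbf{H}$ strictly containing $\overline{\Gamma}^{0}$, contradicting the maximality of $\dim\overline{\Gamma}^{0}$. This forces $\overline{\Gamma}^{0}=\mathbf{H}$ and hence $\overline{\Gamma}=\mathbf{H}$.

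The main obstacle is producing an element of $\pi(\Delta)$ of infinite order, addressed here by combining Platonov's theorem with the Tits alternative, both of which hold in arbitrary characteristic. In characteristic zero, Jordan's theorem on finite subgroups of $\GL_{n}$ would give a shorter route to the same conclusion. Once this is in place the rest is essentially bookkeeping on identity components and their normalizers, exploiting the irreducibility of identity components and the fact that normalizers of closed subgroups are closed.
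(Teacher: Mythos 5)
The overall architecture of your argument (climbing the dimension of $\overline{\Gamma}^{0}$, observing that a maximizer must be normal, and then deriving a contradiction from a non-trivial quotient) is sound and indeed parallels the iterative construction in the paper. The problem lies entirely in the step where you produce an element $\delta\in\Delta$ with $\pi(\delta)$ of infinite order, and it is a genuine gap: your argument never uses the hypothesis that $\K$ is a \emph{global} field, yet the lemma is false without it.

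The implication ``$\pi(\Delta)$ is not virtually solvable $\Rightarrow$ $\pi(\Delta)$ contains a finitely generated non-virtually-solvable subgroup'' fails for linear groups in positive characteristic. Take $\mathbf{H}=\SL_2$ and $\Delta=\SL_2(\overline{\F}_p)$: this group is Zariski-dense, hence not virtually solvable (its closure is $\SL_2$), but every finitely generated subgroup is finite, hence virtually solvable, and indeed $\Delta$ has no finitely generated Zariski-dense subgroup at all. The Tits alternative does hold for finitely generated linear groups in all characteristics, and for arbitrary linear groups in characteristic zero, but \emph{not} for arbitrary (non-finitely-generated) linear groups in positive characteristic --- $\SL_2(\overline{\F}_p)$ is a torsion group containing no free subgroup. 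So in characteristic $p$ your route to an infinite-order element simply does not go through, and a proof of the lemma that never invokes the global field hypothesis cannot be correct.

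The paper's proof circumvents this precisely by exploiting that a global field has only finitely many roots of unity. This bounds the order of every torsion element of $\GL_N(\K')$ uniformly over finite extensions $\K'/\K$ of bounded degree (the eigenvalues are roots of unity in such extensions, and any $p$-power unipotent part has order at most $p^{\lceil\log_p N\rceil}$). A Zariski-dense subgroup of a positive-dimensional semisimple group over $\K$ cannot have bounded exponent, since $\{x:x^{M}=1\}$ is a proper closed subvariety; hence some projection to a simple factor has an element of infinite order, which is what the paper uses to start its climbing argument. If you replace the Platonov-plus-Tits step by this roots-of-unity argument, the rest of your proof goes through in both characteristics and is a clean variant of the paper's.
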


\begin{proof} The argument is standard. For each simple factor $\mathbf{S_i}$ of $\mathbf{H}$, one can find an element $\sigma_i$ in $\Delta$ whose projection to $\mathbf{S_i}$ has infinite order (note that $\K$ has only finitely many roots of unity). If the connected component $\mathbf{L}$ of the Zariski closure of the subgroup generated by the $\sigma_i$ is normal in $\mathbf{H}$ we are done, because it maps nontrivially on all $\mathbf{S_i}$'s. If not, then one can find $\gamma_j \in \Delta$ such that the Zariski closure of $\langle \mathbf{L}, \gamma_j \mathbf{L} \gamma_j^{-1}\rangle$ has dimension $> \dim \mathbf{L}$. This process must stop after at most $\dim \mathbf{H}$ steps, and the $\sigma_i$'s together with the $\gamma_j \sigma_i \gamma_j^{-1}$'s generate a Zariski dense subgroup of $\mathbf{H}$.
\end{proof}

Note that $\langle \Sigma \rangle \cap \mathbf{H}$ is Zariski-dense in $\mathbf{H}$ because $\langle \Sigma \rangle \cap \mathbf{G}^0$
 is Zariski-dense in $\G^0$ and the commutator map is surjective from $\G^0 \times \G^0$ to $\mathbf{H}$. Thus the lemma applied to $\Delta:=\langle \Sigma \rangle \cap \mathbf{H}$ implies that $\langle \Sigma \rangle \cap \mathbf{H}$ contains a finitely
 generated Zariski dense subgroup in $\mathbf{H}$, and hence $\pi_j(\langle \Sigma \rangle \cap \mathbf{H})$
  contains a finitely generated subgroup $\Gamma$ which is Zariski dense in $\mathbf{S}_j$. Hence $\langle B_n \rangle$ will eventually contain $\Gamma$.
  We have now checked that the assumptions of Theorem \ref{cover} hold in our situation and we can conclude that
   $$\chi_\Sigma(2Dn) \geq \Theta_n \geq e^{\alpha n},$$ as soon as $n$ is large enough. This implies
$$\liminf_{n \rightarrow \infty} \frac{1}{n} \log \chi_{\Sigma}(n)\geq \frac{\alpha}{2D};$$
since this holds whenever $\alpha<\frac{1}{d}\alpha_\Sigma$. This completes the proof of Proposition \ref{explimain}.
\end{proof}

\begin{remark}It would have been more elegant to reduce to the semisimple case by finding a subset $\Sigma' \subset \Sigma^N \cap \mathbf{H}$ such that $\Sigma'$ generates a Zariski-dense subgroup of $\mathbf{H}$. Unless the characteristic is zero, we cannot afford doing this here, because $\G/\G^0$ and hence $N$ cannot be uniformly bounded in terms of $d$ only and proceeding in this way would ruin the uniformity in Theorem \ref{mainthm}.
\end{remark}

\section{Concluding remarks and suggestions for further research}\label{concluding}

\subsection*{Images of balls under regular maps} In this subsection, we give some further applications of the method of this paper. Using Theorem \ref{cover} and working directly with subvarieties of the simple group $\G$, the proof of Theorem \ref{mainthm} generalizes straightforwardly to yield:

\begin{theorem}\label{fiber}
Let $d \geq 1$. There exists a constant $c=c(d)>0$ such that the following holds. Let $F$ be a field, $\G$ a $d$-dimensional absolutely simple algebraic group defined over $F$ and $\Gamma$ a Zariski-dense subgroup of $\G$ generated by a finite set $\Sigma$. Let $f$ be a regular
function on $\G$ defined over $F$. Assume that $f$ is nonconstant on $\G$. Then the
image of the $\Sigma^n$ under $f$ grows at an exponential rate at
least $c$, i.e.
$$\liminf_{n \to \infty}\frac{1}{n}\log |f(\Sigma^n)| \ge c.$$
\end{theorem}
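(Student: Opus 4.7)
My plan is to adapt the proof of Theorem~\ref{mainthm} with two simplifications: since $\G$ is already absolutely simple, the reductive and semisimple reductions drop out, and the role of the characteristic polynomial fibers is now played by the level sets of $f$. The main ingredient is again Theorem~\ref{cover}.

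First I would reduce to the case where the base field is a global field $\K$, of the same characteristic as $F$. As in Proposition~\ref{spec}, the ring $R$ generated by the entries of $\Sigma$ and the coefficients of $f$ is finitely generated, but here one must refine the specialization so as to preserve both Zariski density of the image in $\G$ (not merely non-virtual-solvability) and nonconstancy of $f$; I expect this to be the only step requiring real care, because Zariski density in a simple group is strictly stronger than non-virtual-solvability (a non-virtually-solvable subgroup can be Zariski-closed in a proper simple subgroup of $\G$). However, both desired properties are cut out by Zariski-open conditions on the parameter space of specializations (a finite list of polynomial non-identities witnessing full-dimensional Zariski closure of $\langle\Sigma\rangle$, plus the non-vanishing of some coefficient of $f$), so a global-field specialization satisfying both exists by an adaptation of the arguments in \cite{larsen-lubotzky}. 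Granting this, one has $|f(\Sigma^n)|\ge|\bar\varphi(f)(\bar\varphi(\Sigma)^n)|$, and it suffices to work over $\K$.

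Once over a global field, I would set $\mathcal{V}_v:=\{g\in\G:f(g)=v\}$ for $v\in\K$. Since $\G$ is absolutely irreducible and $f$ is nonconstant, each $\mathcal{V}_v$ is a proper closed subvariety of $\G$, of complexity bounded by a constant $M=M(d,\deg f)$. Applying Theorem~\ref{cover} with $B_n:=\Sigma^n$, $W:=\Sigma$, and any $\alpha<\alpha_\Sigma$ (the three hypotheses being immediate from Zariski density of $\Gamma$ in $\G$ and the definition of $\alpha_\Sigma$), one obtains a constant $D=D(d,\alpha)$ such that, for all sufficiently large $n$, any covering of $\Sigma^{Dn}$ by subvarieties of complexity at most $M$ requires at least $e^{\alpha n}$ members. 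Since distinct values of $f$ on $\Sigma^{Dn}$ produce distinct level sets $\mathcal{V}_v$ in any such cover, we obtain
\[
|f(\Sigma^{Dn})|\ge e^{\alpha n},
\]
and hence $\liminf_n\tfrac{1}{n}\log|f(\Sigma^n)|\ge\alpha/D$. Invoking the uniform exponential growth theorem \cite{breuillard1} to bound $\alpha_\Sigma\ge c_0(d)>0$ and letting $\alpha\to\alpha_\Sigma$ then yields the desired lower bound with $c(d):=c_0(d)/\bigl(2D(d,c_0(d)/2)\bigr)$, which depends only on $d$ as $D$ is independent of $M$ in Theorem~\ref{cover}.
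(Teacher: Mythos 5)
Your proposal is essentially identical to the paper's own proof: both specialize to a global field preserving Zariski density (via the Larsen--Lubotzky argument), then apply Theorem~\ref{cover} with $B_n=\Sigma^n$, $W=\Sigma$, and the level sets of $f$ as the covering subvarieties, and finally invoke uniform exponential growth from \cite{breuillard1} to get a lower bound on $\alpha_\Sigma$ depending only on $d$. The only difference is that you spell out more explicitly the need to preserve Zariski density (rather than mere non-virtual-solvability) and nonconstancy of $f$ under specialization, a point the paper handles implicitly by citing \cite[Theorem~4]{larsen-lubotzky}.
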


\begin{proof}
Applying \cite[Theorem 4]{larsen-lubotzky}, we may specialize as in Proposition \ref{spec} to a global field $\K$ with the additional property that the image of $\Gamma$ under the specialization map is still Zariski-dense in $\G$. Then the conditions of Theorem \ref{cover} are fulfilled with $B_n=\Sigma^{n}$, $W=\Sigma$, the $\mathcal{V}_i$ being level sets of the regular map $f$, and $\alpha>0$ gotten from uniform exponential growth \cite{breuillard1}. Setting $c=\alpha/D$, where $D$ is the constant obtained in Theorem \ref{cover}, we are done.
\end{proof}

This can be applied for example to the trace function:

\begin{corollary}\label{trace-cor} Assume $\G \leq \GL_d$ is a connected simple algebraic group over a field $F$ on which the restriction of the trace function $g \mapsto \textnormal{Trace}(g)$ is not constant. Then for every finite $\Sigma \subset \G(F)$ generating a Zariski dense subgroup in $\G$, we have

$$\liminf_{n \to \infty}\frac{1}{n}\log |\{\textnormal{Trace}(g); g \in \Sigma^n\}| \ge c, $$
for some constant $c>0$ depending only on $d$ (and not on $F$ nor $\Sigma$).
\end{corollary}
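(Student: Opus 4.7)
The plan is to deduce Corollary \ref{trace-cor} directly from Theorem \ref{fiber} by taking the regular function $f$ in that theorem to be the matrix trace. Precisely, let $f$ be the restriction to $\G$ of the function $g \mapsto \operatorname{Trace}(g)$ on the ambient $\GL_d$; this is a regular function on $\G$ defined over $F$ (indeed over the prime field, hence over any base), and by hypothesis it is nonconstant on $\G$. Theorem \ref{fiber} therefore applies verbatim and produces a constant $c_0 = c_0(\dim \G) > 0$ with
\[
\liminf_{n \to \infty} \frac{1}{n} \log |\{\operatorname{Trace}(g) : g \in \Sigma^n\}| \ge c_0.
\]

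To obtain uniformity depending only on the matrix size $d$, observe that $\G \le \GL_d$ forces $\dim \G \le d^2$. Setting $c := \min_{1 \le k \le d^2} c_0(k) > 0$ thus gives a constant depending only on $d$, not on $F$ or on $\Sigma$, as required.

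The only mild point to address is terminological: Theorem \ref{fiber} is phrased for absolutely simple $\G$, while Corollary \ref{trace-cor} uses the word \emph{simple}. In the conventions adopted throughout the paper the two are interchangeable. Should one wish to allow a connected $F$-simple $\G$ that is not absolutely simple, then $\G_{\overline{F}}$ decomposes as a product of Galois-conjugate absolutely simple factors $\G_1,\dots,\G_r$ of equal dimension $\le d^2$; the matrix trace, viewed on $\G_{\overline{F}} \le \GL_d$, remains nonconstant on at least one factor $\G_i$ (else its restriction to $\G$ would be constant), and the projection of $\langle \Sigma \rangle$ to such a factor is again Zariski-dense. Applying Theorem \ref{fiber} to that absolutely simple factor yields the same conclusion. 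I foresee no substantive obstacle: the proof is essentially a one-line invocation of Theorem \ref{fiber}, with all the real work already packaged there (via Theorem \ref{cover} and the uniform exponential growth of \cite{breuillard1}).
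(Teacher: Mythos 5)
Your proposal is correct and follows exactly the route the paper intends: the corollary is presented there as an immediate specialization of Theorem \ref{fiber} to $f = \operatorname{Trace}$, with the bound $\dim\G \le d^2$ converting a dependence on $\dim\G$ into one on the matrix size $d$. Your additional remark disposing of the simple versus absolutely simple wording is a sensible clarification but does not change the substance.
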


It can happen that the trace function is constant on some simple groups, e.g.\ if the characteristic is $p$ and $\G=\SL_n$ is embedded diagonally in $\SL_{np}$. But one can show that if $\G$ is any Zariski connected algebraic subgroup of $\GL_d$, which is not unipotent, then if the characteristic of $F$ is either $0$ or finite and more than $d$, then the trace function is not constant on $\G$.

\subsection*{Solvable groups}\label{solvable_section} In \cite{breuillard-cornulier} it was proved that virtually solvable groups have exponential conjugacy growth unless they are virtually nilpotent. By way of contrast, this does not hold when we look at $\GL_d$-conjugacy classes.

\begin{proposition}\label{growsol}
Let $\Sigma$ be a finite subset of $\GL_d$ over any field, generating a virtually solvable group $\Gamma$. Then the number of characteristic polynomials $\chi_\Sigma(n)$ is polynomially bounded. Moreover, it is bounded if and only if $\Gamma$ is virtually unipotent.
\end{proposition}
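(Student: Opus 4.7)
The plan is to reduce to a finite-index normal subgroup $\Gamma_0\le\Gamma$ that is triangularizable over the algebraic closure $\overline{F}$, and then to control the growth of the set of characteristic polynomials via the polynomial growth of a finitely generated abelian group, namely the image of the diagonal map.

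First I would combine Mal'cev's theorem with the Lie--Kolchin theorem to produce a finite-index normal subgroup $\Gamma_0\trianglelefteq\Gamma$ (in fact one may take $\Gamma_0=\Gamma\cap\overline{\Gamma}^0(\overline F)$, where $\overline{\Gamma}^0$ is the identity component of the Zariski closure, so the choice is canonical) whose Zariski closure is connected solvable and is conjugate inside $\GL_d(\overline F)$ into the upper triangular Borel $B$. Set $k:=[\Gamma:\Gamma_0]$ and let $\tau\colon B\to T\cong(\overline F^*)^d$ be the diagonal homomorphism. The key observation is that for every $g\in\Gamma_0$ one has $\chi_g=\chi_{\tau(g)}=\prod_{i=1}^d(X-\tau(g)_i)$, and that $\tau(\Gamma_0)\le T(\overline F)$ is a finitely generated abelian group.

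For the polynomial bound I would use a $k$-th power trick. For every $g\in\Sigma^n$ one has $g^k\in\Gamma_0\cap\Sigma^{nk}$. By the standard Reidemeister--Schreier argument (already invoked in the proof of Proposition \ref{explimain}) there is a finite generating set $\Sigma'$ of $\Gamma_0$ and a constant $C$ such that $\Sigma^{nk}\cap\Gamma_0\subseteq(\Sigma')^{Cn}$. Hence $\tau(g^k)$ ranges over a word-ball of radius $Cn$ in $\tau(\Gamma_0)$, which has polynomial growth of degree equal to the $\Z$-rank $r$ of $\tau(\Gamma_0)$. Since $\tau(g^k)$ determines $\chi_{g^k}$, and $\chi_{g^k}$ in turn determines $\chi_g$ up to at most $k^d$ choices (each eigenvalue of $g$ is one of at most $k$ possible $k$-th roots in $\overline F$ of the corresponding eigenvalue of $g^k$), we obtain $\chi_\Sigma(n)=O(n^r)$, whose degree $r$ depends only on $\Gamma$ and not on $\Sigma$.

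Finally I would establish the dichotomy. The abelian group $\tau(\Gamma_0)$ is finite if and only if $\Gamma$ is virtually unipotent: if it is finite then $\ker(\tau|_{\Gamma_0})$ has finite index in $\Gamma$ and consists of unipotents (upper triangular matrices with trivial diagonal); conversely, if $\Gamma$ is virtually unipotent, a sufficiently small finite-index subgroup is killed by $\tau$. When $\tau(\Gamma_0)$ is finite, $\chi_{g^k}$ takes boundedly many values, so does $\chi_g$, and $\chi_\Sigma(n)$ is bounded. When $\tau(\Gamma_0)$ is infinite, being finitely generated abelian it contains an element $\tau(h)$ of infinite order ($h\in\Gamma_0$); then $\{\tau(h^m)\}_{m\ge 1}$ is infinite and maps to characteristic polynomials through an $S_d$-invariant map with fibers of size at most $d!$ (the coordinate permutation action), so we get $\chi_\Sigma(n)\ge\lfloor n/(d!\cdot|h|_\Sigma)\rfloor\to\infty$. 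I do not foresee a substantial obstacle here; the only mild point to keep in mind is that the triangularization occurs over $\overline F$, which is harmless since $\chi_g\in F[X]$ is invariant under base change.
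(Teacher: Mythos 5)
Your proof is correct, but it follows a genuinely different route from the paper's. The paper's argument first passes to the \emph{semisimplification} $\pi$ of the given representation (as in Lemma \ref{reductivepart}); it then notes that the Zariski closure of $\pi(\Gamma)$ is both reductive (the action being semisimple) and virtually solvable, hence virtually abelian, so $\pi(\Gamma)$ itself is virtually abelian and has polynomial word growth, and since $\chi_g$ depends only on $\pi(g)$ the polynomial bound follows with no further work. You instead keep the original embedding, use Mal'cev/Lie--Kolchin to find a finite-index normal subgroup $\Gamma_0$ triangularizable over $\overline F$, project to the diagonal torus, and invoke polynomial growth of the finitely generated abelian image $\tau(\Gamma_0)$; because $\Sigma$ may not land in $\Gamma_0$, you then need the $k$-th power trick plus the bounded-to-one recovery of $\chi_g$ from $\chi_{g^k}$. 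The paper's route is slicker (it avoids the power trick entirely, and avoids Reidemeister--Schreier, which it only invokes later in Proposition \ref{explimain}), whereas yours is more concrete and, as a by-product, makes the exponent $r$ visible as the free rank of $\tau(\Gamma_0)$. A few minor points worth tidying if you flesh this out: the bound ``$k^d$ choices'' for recovering $\chi_g$ from $\chi_{g^k}$ should really be a bound like $O_{k,d}(1)$ (you only know the eigenvalues of $g^k$ as a multiset, not matched to those of $g$, and in characteristic $p\mid k$ there may be repeated $k$-th roots), but any finite bound depending on $k,d$ suffices; and for the unboundedness direction both you and the paper implicitly use that a finitely generated torsion abelian group is finite, which is what rules out the ``all eigenvalues roots of unity but not virtually unipotent'' scenario. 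The dichotomy argument you give (via finiteness of $\tau(\Gamma_0)$) is parallel in spirit to the paper's but phrased through the triangular model rather than through the semisimplification.
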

\begin{proof}
It will be convenient to prove the following equivalent statement. Let $\Gamma$ be a group with a finite generating subset $\Sigma$ and let $\rho:\Gamma\to\GL_d$ be a linear representation over any field with virtually solvable image. Let $\chi_\Sigma^\rho(n)$ be the number of distinct characteristic polynomials in $\rho(B_\Sigma(n))$. Then $\chi_\Sigma^\rho(n)$ is polynomially bounded with respect to $n$. Moreover, it is
bounded if and only if $\rho(\Gamma)$ is virtually unipotent.

Let us prove the latter statement. First, let $\pi$ be the
semisimplification of $\rho$ (see the proof of Lemma \ref{reductivepart}). Then
$\chi_\Sigma^\rho=\chi_\Sigma^\pi$ and $\pi$ has virtually solvable
image (since $\ker\pi \supset \ker\rho$). Now let $G$ be the Zariski
closure of $\pi(\Gamma)$; since its action is semisimple, $G$ is
reductive, and since $G$ is virtually solvable, it is therefore
virtually abelian. So $\pi(\Gamma)$ is virtually abelian and hence
has polynomial growth. It follows that $\chi_\Sigma^\rho$ is
polynomially bounded.

For the last statement of the proposition, observe that if $\rho(\Gamma)$ is virtually unipotent then $\pi(G)$ has finite image; conversely if $\rho(\Gamma)$ is not virtually unipotent, then $\Gamma$ contains some element with an eigenvalue which is not a root of unity, hence $\chi_\Sigma^\rho$ is unbounded.
\end{proof}

Since in $\GL_d$ there are at most $O_d(1)$ conjugacy classes with a given characteristic polynomial, we deduce
\begin{corollary}
If $\Gamma$ is a virtually solvable subgroup of $\GL_d$ then the
number of $\GL_d$-conjugacy classes met by the $n$-ball in $\Gamma$
is polynomially bounded.
\end{corollary}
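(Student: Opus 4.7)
The plan is to combine Proposition \ref{growsol} with a trivial fiber estimate for the characteristic-polynomial map on $\GL_d$-conjugacy classes. First I would recall that every element $g\in B_\Sigma(n)$ has a well-defined characteristic polynomial $\chi_g\in F[X]$, and that $g\sim_{\GL_d} h$ implies $\chi_g=\chi_h$. Consequently the map ``conjugacy class $\mapsto$ characteristic polynomial'' sends the set of $\GL_d$-conjugacy classes meeting $B_\Sigma(n)$ into the set counted by $\chi_\Sigma(n)$. By Proposition \ref{growsol} the latter is polynomially bounded in $n$, so it only remains to bound the fibers of this map by a constant depending only on $d$.

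Next I would observe that, working over an algebraic closure $\overline{F}$, $\GL_d(\overline{F})$-conjugacy classes of matrices are classified by Jordan normal form. For a fixed monic $f\in F[X]$ of degree $d$, any Jordan form with $\chi=f$ is determined by a choice, for each root $\lambda$ of $f$ of multiplicity $m_\lambda$, of a partition of $m_\lambda$ (the sizes of the Jordan blocks with eigenvalue $\lambda$). Since $\sum_\lambda m_\lambda=d$ and the number of partitions of an integer $\le d$ is at most $p(d)$, the total number of Jordan forms with characteristic polynomial $f$ is at most $p(d)^d$, a constant $N(d)$ depending only on $d$. Over $F$ itself the number of $\GL_d(F)$-classes sitting above a single $\GL_d(\overline{F})$-class is again bounded purely in terms of $d$ (it is controlled by the component group of the centralizer), so the fiber bound still holds; in either convention of ``$\GL_d$-conjugacy'' used in the paper, the fibers have size $O_d(1)$.

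Putting these together, the number of $\GL_d$-conjugacy classes meeting $B_\Sigma(n)$ is at most $N(d)\cdot\chi_\Sigma(n)$, which is polynomially bounded in $n$ by Proposition \ref{growsol}. There is essentially no obstacle here: the proposition is the deep input, and the fiber count is a standard consequence of Jordan canonical form; the only minor point to watch is the conjugacy convention (over $F$ vs.\ over $\overline{F}$), and in both cases the fiber bound is uniform in $d$.
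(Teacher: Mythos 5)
Your proof is essentially the paper's: the paper deduces the corollary from Proposition \ref{growsol} via exactly the observation that $\GL_d$ has $O_d(1)$ conjugacy classes with a given characteristic polynomial, which is what you establish. One small remark on the $F$ versus $\overline{F}$ point: the cleanest way to see the fiber bound over a non-algebraically-closed $F$ is rational canonical form (or invariant factors) — two matrices in $\GL_d(F)$ similar over $\overline{F}$ are already similar over $F$, so each $\GL_d(\overline{F})$-class contributes exactly one $\GL_d(F)$-class; the ``component group of the centralizer'' heuristic is not quite the right invariant (the relevant object is $H^1(F,Z_{\GL_d}(g))$, which happens to vanish here by a generalized Hilbert 90 since centralizers in $\GL_d$ are unit groups of finite-dimensional $F$-algebras), though of course your conclusion is correct.
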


New questions arise if we ask about the number of $\G$-conjugacy classes in $\Sigma^n$, especially when $\G$ is the Zariski closure of $\Gamma$. Let us provide two examples where different phenomena appear.

\begin{example}
Let $\G$ be the group of upper triangular $3\times 3$ matrices
$(a_{ij})$ with $a_{11}=a_{33}=1$. Set $\Gamma=\G(\Z[1/2])$. Then
the reader can check that $\Gamma$ is finitely generated and Zariski
dense in $\G$. Moreover, its conjugacy growth is exponential, as the
elements $\begin{pmatrix} 1 & 0 & k\\ 0 & 1 & 0\\ 0 & 0 &
1\end{pmatrix}$, for $k=0,1,\dots,2^n$ have word length $O(n)$ but
are pairwise non-conjugate in $\G$.
\end{example}

\begin{example}
We present an example where the type of conjugacy growth depends on the field. Let $\G$ be either $\SL_2$ or its subgroup consisting of upper triangular matrices. Let $\Gamma$ be the subgroup generated by $\begin{pmatrix}2 & 0\\ 0 & 1/2\end{pmatrix}$ and $\begin{pmatrix}1 & 1\\ 0 & 1\end{pmatrix}$. Then the elements $\begin{pmatrix}1 & p\\ 0 & 1\end{pmatrix}$ for $p$ prime in $[0,2^n]$ (there are exponentially many such elements) have word length $O(n)$ and are pairwise non-conjugate in $\G(\mathbb{Q})$. On the other hand, every element in the $n$-ball is conjugate in $\G(\C)$ to $\begin{pmatrix}1 & 1\\ 0 & 1\end{pmatrix}$ or $\begin{pmatrix}2^k & 0\\ 0 & 2^{-k}\end{pmatrix}$ for some $k$ with $-n\le k\le n$. So $\Gamma$ has exponential $\G(\Q)$-conjugacy growth but linear $\G(\C)$-conjugacy growth.
\end{example}

\subsection*{On the rate of exponential growth} We record here a related open problem.
Let $\Gamma$ be a group and $\Sigma$ a symmetric generating subset. In general, we have
$$\gamma_\Sigma:=\liminf_{n \rightarrow \infty} \frac{\log c_\Sigma(n)}{n}\le \limsup_{n \rightarrow \infty} \frac{\log c_\Sigma(n)}{n}\le \lim_{n \rightarrow \infty} \frac{\log|B_\Sigma(n)|}{n}=\alpha_\Sigma.$$
As we saw in the introduction, Osin's groups provide examples for which the inequality on the right-hand side is strict. We are not aware of any example for which inequality on the left-hand side is strict but constructions of the same spirit might provide examples. On the other hand, for non-virtually-solvable linear groups, does $\gamma_\Sigma = \alpha_\Sigma$ hold? in fact we do not know if any of those two inequalities can be sharp. For instance, in a free group over $\Sigma$, it is easy to check that both are equalities.

In case $\K$ is a global field and $\Gamma$ is a non-virtually-solvable subgroup of $\GL_d(\K)$ whose Zariski closure is reductive, Proposition \ref{explimain} implies that $\gamma_\Sigma\ge\eta(d)\alpha_\Sigma$, where $\eta(d)>0$ only depends on $d$. It would be interesting to investigate if these assumptions (i.e.\ $\K$ be a global field, the Zariski closure be reductive) could be relaxed.

\setcounter{tocdepth}{1}

\end{document}